\title{Genus 1 Curves in Severi--Brauer Surfaces}
\author{David J. Saltman\\
Center for Communications Research\\
805 Bunn Drive\\
Princeton, NJ 08540}
\date{May 13, 2021}
\newtheorem{theorem}{Theorem}
\newtheorem{proposition}{Proposition}
\newtheorem{lemma}{Lemma}
\newtheorem{corollary}{Corollary}
\begin{document}

\baselineskip=.3in

\maketitle

\def\newline{\hfill\break}
\def\scong{{\scriptstyle\|}\lower.2ex\hbox{$\wr$}}
\def\Z{{\mathbb Z}}
\def\R{{\mathbb R}}
\def\Q{{\mathbb Q}}
\def\A{{\mathbb A}}
\def\L{{\mathbb L}}
\def\C{{\mathbb C}}
\def\SB{\mathop{\rm SB}\nolimits}
\def\Ext{\mathop{\rm Ext}\nolimits}
\def\Aff{\mathop{\rm Aff}\nolimits}
\def\SAff{\mathop{\rm SAff}\nolimits}
\def\Jac{\mathop{\rm Jac}\nolimits}
\def\Tor{\mathop{\rm Tor}\nolimits}
\def\Div{\mathop{\rm Div}\nolimits}
\def\Prin{\mathop{\rm Prin}\nolimits}
\def\Sand{\mathop{\rm Sand}\nolimits}
\def\Cl{\mathop{\rm Cl}\nolimits}
\def\Tr{\mathop{\rm Tr}\nolimits}
\def\tr{\mathop{\rm tr}\nolimits}
\def\End{\mathop{\rm End}\nolimits}
\def\Bil{\mathop{\rm Bil}\nolimits}
\def\Res{\mathop{\rm Res}\nolimits}
\def\Im{\mathop{\rm Im}\nolimits}
\def\Hom{\mathop{\rm Hom}\nolimits}
\def\Ind{\mathop{\rm Ind}\nolimits}
\def\Norm{\mathop{\rm Norm}\nolimits}
\def\ord{\mathop{\rm ord}\nolimits}
\def\inf{\mathop{\rm inf}\nolimits}
\def\res{\mathop{\rm res}\nolimits}
\def\Map{\mathop{\rm Map}\nolimits}
\def\Pic{\mathop{\rm Pic}\nolimits}
\def\PGL{\mathop{\rm PGL}\nolimits}
\def\GL{\mathop{\rm GL}\nolimits}
\def\SL{\mathop{\rm SL}\nolimits}
\def\Gal{\mathop{\rm Gal}\nolimits}
\def\Br{\mathop{\rm Br}\nolimits}
\def\ram{\mathop{\rm ram}\nolimits}
\def\Spec{\mathop{\rm Spec}\nolimits}
\def\gcd{\mathop{\rm gcd}\nolimits}
\def\lcm{\mathop{\rm lcm}\nolimits} 
\def\deg{\mathop{\rm deg}\nolimits}
\def\rank{\mathop{\rm rank}\nolimits}
\def\Schur{\mathop{\rm Schur}\nolimits}
\def\Aut{\mathop{\rm Aut}\nolimits}
\def\Out{\mathop{\rm Out}\nolimits}
\def\Cor{\mathop{\rm Cor}\nolimits}
\def\CH{\mathop{\rm CH}\nolimits}
\def\M{{\cal M}}
\def\J{{\cal J}}
\def\S{{\cal S}}
\def\O{{\zeta}}
\def\diagram{\def\normalbaselines{\baselineskip21pt\lineskip1pt
	\lineskiplimit0pt}}
\def\rtimes{\mathop{\times\!\!{\raise.2ex\hbox{$\scriptscriptstyle|$}}}
	\nolimits} 
\def\blackbox{\hbox{\vrule width6pt height7pt depth1pt}} 
\outer\def\Demo #1. #2\par{\medbreak\noindent {\it#1.\enspace}
	{\rm#2}\par\ifdim\lastskip<\medskipamount\removelastskip
	\penalty55\medskip\fi}
\overfullrule=0pt
\def\Br{\mathop{\rm Br}\nolimits}
\def\Cor{\mathop{\rm Cor}\nolimits}
\def\A{{\mathbb A}}
\def\H{{\mathbb H}}
\def\P{{\mathbb P}}\hsize=5.9truein
\voffset=.5truein
\def\R{{\mathbb R}}
\def\hangbox to #1 #2{\vskip1pt\hangindent #1\noindent \hbox to #1{#2}$\!\!$}
\def\myitem#1{\hangbox to 30pt {#1\hfill}} 

\begin{abstract}
In a talk at the Banff International Research Station 
in 2015 Asher Auel asked questions about genus one curves 
in Severi-Brauer varieties $SB(A)$. More specifically he asked 
about the smooth cubic curves in Severi-Brauer surfaces, 
that is in $SB(D)$ where $D/F$ is a degree three 
division algebra.  Even more specifically, 
he asked about the Jacobian, $E$, of these curves. 
In this paper we give a version of an answer to both these questions, 
describing the surprising connection between these curves and 
properties of the algebra $A$.  
Let $F$ contain $\rho$, a primitive 
third root of one. Since $D/F$ is cyclic, it is 
generated over $F$ by $x,y$ such that 
$xy = \rho{yx}$ and we call $x,y$ a skew commuting 
pairs. The connection mentioned above is between 
the Galois structure of the three torsion points 
$E[3]$ and the Galois structure of skew commuting 
pairs in extensions $D \otimes_F K$. Given a description 
of which $E$ arise, we then describe, via Galois cohomology, 
which $C$ arise.

\end{abstract}

\clearpage

\Large

\section{Introduction}

In algebraic geometry, perhaps the simplest 
object is projective space. The next simplest object might be a Severi--Brauer variety, which is only interesting when 
the ground field $F$ is not algebraically closed. Severi--Brauer varieties are defined by a central simple algebra $A/F$. In fact, the Severi--Brauer variety, $\SB(A)$, 
is defined as the variety of minimal right ideals of $A$. More precisely, if $A/F$ 
has degree $n$ (i.e. dimension $n^2$ over $F$), then $\SB(A)$ is a closed subvariety of the 
Grassmann variety $G_n(A)$ consisting of the $n$ dimensional subspaces of $A$ which are right ideals. It follows that $\SB(A)$ 
has a rational point if and only if $A \cong M_n(F)$, or in words, if $A$ is split. 
If $A = \End_F(V)$ is split, then $\SB(A)$ is isomorphic to the projective space $\P^{n-1}$, since any such right ideal $I$ can be identified with the line $L \subset V$ which 
is the range of all nonzero elements of $I$. 

This explains why we view $\SB(A)$ as ``almost" projective space. After a finite extension of $F$, $\SB(A)$ is projective space. That is, 
$\SB(A)$ is a form of projective space.  

If $A/F$ has degree two, that is, $A/F$ is a quaternion algebra, then $\SB(A)$ is a curve 
and a form of the projective line. Quite 
a bit is known about these curves, since they are just the smooth conics in $\P^2$. 

Thus, perhaps, it pays to consider the case 
of next highest dimension, the Severi--Brauer 
variety of a degree-three division algebra $D/F$. Note that the division algebra case is the interesting one. If $A/F$ is degree three and not a division algebra, then $A$ is split and $\SB(A)$ is $\P^2$. 

Of course $\SB(D)$ is a form of $\P^2$, and to understand it one might want to understand the curves in $\SB(D)$. To start with, suppose $L \subset \SB(D)$ is a line, by which we mean it becomes a line when we split $D$. If we extend scalars to the separable closure $\bar F$, then $D \otimes_F \bar F = \End_{\bar F}(V)$ and the line 
$L$ is identified with a subspace $W \subset V$ of dimension 2. 
To such a $W$ we can associate the ideal, $J$, of all elements with range in $W$, which is of dimension 6 over $\bar F$. If $L$ 
is defined over $F$, then $J$ is Galois invariant and so yields 
an ideal of $D$ of dimension 6. This implies $D$ is split, 
the case we are avoiding. 

Suppose, then, that $C \subset SB(D)$ is an absolutely irreducible  smooth degree two curve or a conic. (The other cases are easy.) It follows that $C$ is smooth. $C$ has no rational point, so $C$ 
is of the form $\SB(D')$ for $D'/F$ a quaternion division algebra. That is, there is a stalk 
$R = {\cal O_{C}}$ of the sheaf of regular functions on $\SB(D)$ such that $R/M$ is the 
field of fractions, $K'$, of $\SB(D')$. 

Almost by definition, $\SB(D)$ splits $D$. Since $R$ is a stalk, $R$ splits $D$. By the functionality of the Brauer group, $K'$ 
splits $D$. But the kernel of $\Br(F) \to 
\Br(K')$ is generated by $D'$, implying 
that $D$ is trivial or Brauer equivalent 
to $D'$, and both are a contradiction. 
Thus $\SB(D)$ does not contain any 
absolutely irreducible conics. 

There is a better way to view the above result. Let $\bar F$ be the separable closure of 
$F$, so $\SB(D) \times_F \bar F = \P^2_{\bar F}$. 
The Picard group of $\P^2_{\bar F}$ consists of the 
standard line bundles ${\cal O}(n)$ for 
$n \in \Z$. Artin (\cite{A} p. 203) observed that the Picard 
group of $\SB(D)$ is generated by ${\cal L}$, 
the unique line bundle that pulls back to 
${\cal O}(3)$ on $\P^2_{\bar F}$. Viewing the Picard 
group as the divisor class group, we have that all curves on $\SB(D)$ have degree 
a multiple of three. 

If $C$ is an reducible curve of degree 3, 
then since no lines are Galois invariant 
it must be a union a three distinct lines which do 
not all intersect. That is, $C$ is a triangle determined by three distinct 
but Galois conjugate points. This means that $\SB(D)$ has plenty of triangles, namely, 
one for each maximal subfield. If $C$ is an absolutely irreducible but 
not smooth curve, then $C$ has one or two singular points which must be 
Galois conjugate and so $D$ is split by a degree one or two extension, which 
is impossible. 

Thus we can turn to the situation we are really interested in, 
where $C \subset \SB(D)$ is a cubic smooth absolutely irreducible curve, 
which is a 
curve of genus one. All this discussion motivates the question asked by 
Asher Auel, namely, to characterize the genus one curves in $\SB(D)$. 

It is not hard to see from basic considerations that 
the $j$ invariants that appear form a dense set in $\P^1_F$. 
Using the techniques of \cite{VV}, V\'arilly-Alvarado and Viray 
have given an explicit 
subset of such $j$ invariants which appear. 
Asher himself observed in unpublished work that cubic curves 
with $j = 0$ appear. The techniques developed here can distinguish 
between curves of the same $j$ invariant, which are not isomorphic. 
Instead of $j$ invariants, what is important for our main result 
is the group of three torsion points as a Galois module. 
Given that the elliptic curve $E$ appears as the Jacobian 
of a curve $C$, we describe which $C$ arise via Galois 
cohomology. Note the connection between these results 
and the result of \cite{O2} p. 331 which, for fixed $E$, 
gives the connection between cohomology and maps from 
genus one curves to Severi-Brauer varieties. The author 
would like to thank Asher Auel for the original question 
but most importantly for his detailed responses to earlier 
versions of this paper.

\section{Preliminaries} 

Let $A/F$ be a central simple algebra of degree three. Set $\bar F$ 
to be the separable closure of $F$, and $\bar G$ the Galois group 
of $\bar F/F$. Define $\SB(A)$ to be the Severi--Brauer 
variety of the central simple algebra $A$. Recall 
that $\SB(A)$ is the variety of right ideals of 
$A$ of dimension 3 over $F$. For simplicity 
we assume the characteristic of $F$ is not two or three. 

We know that $\SB(A)$ is a form of $\P^2$ and that 
the line bundle ${\cal O}(3)$ is defined on $\SB(A)$, 
which is also clear because this is the anticanonical bundle. 
We want to be more concrete, however. 
The tensor power $A^3 = A \otimes_F A \otimes_F A$ has an action 
of the symmetric group $S_3$ and we consider $B$, the algebra 
of $S_3$ fixed elements. By \cite{S}, this action of $S_3$ is induced 
by conjugation by a group $S_3 \subset (A^3)^*$. That is, 
$B$ is the centralizer of $F[S_3] \subset A^3$. But $F[S_3]$ 
is the direct sum $F \oplus F \oplus M_2(F)$ where the first 
$F$ corresponds to the trivial representation. If $e \in F[S_3]$ 
is the associated idempotent to this first summand, we set $S^3(A) = eBe$. Note that 
if we extend scalars to $\bar F$, so $A \otimes_F \bar F = 
\End_{\bar F}(\bar V)$, then $S^3(A \otimes_F \bar F) = 
\End_{\bar F}(S^3(\bar V))$, where $S^3(\bar V)$ is the 
symmetric power of the vector space $\bar V$. For this reason 
we call $S^3(A)$ the symmetric power of $A$. Note that 
this implies that $S^3(A)$ has degree 10. Also note that as 
$A^3 = M_{27}(F)$ is split, $S^3(A)$ is also split and so 
$S^3(A) = M_{10}(F)$. 

In particular, $\SB(S^3(A)) = \P^9$ is the nine dimensional 
projective space. There is an embedding 
$\SB(A) \to \SB(S^3(A))$ defined by $I \to e(I \otimes I \otimes I)^{S_3}e \subset 
S^3(A)$. Extending scalars to to $\bar F$ again, this is 
just the Segre embedding $\P^2(\bar F) \to \P^9(\bar F)$. 
Finally, the line bundle ${\cal O}(1)_{\P^9}$ is defined on 
$\P^9 = \SB(S^3(A))$. If $L$ is the restriction of this bundle 
to $\SB(A)$, then after extending scalars to $\bar F$, 
$L$ becomes ${\cal O}(3)_{\P^2}$ which shows this 
bundle is defined over $\SB(A)$. 

\begin{lemma} \label{lem:1}
 There is a line bundle $L$ 
on $\SB(A)$ which becomes ${\cal O}(3)_{\P^2}$ 
after scalar extension to $\bar F$ and is the restriction 
of ${\cal O}(1)_{\P^9}$ on $\SB(S^3(A))$. 
\end{lemma}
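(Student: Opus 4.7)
The plan is to verify the construction already laid out in the paragraphs immediately preceding the statement. The two ingredients are (a) a morphism $\iota\colon\SB(A)\to\SB(S^3(A))$ defined by $I\mapsto e(I\otimes I\otimes I)^{S_3}e$, and (b) the fact that $S^3(A)=M_{10}(F)$ is split, so $\SB(S^3(A))=\P^9$ carries the standard bundle $\mathcal{O}(1)_{\P^9}$. Granted these, the desired bundle is $L=\iota^*\mathcal{O}(1)_{\P^9}$.

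First I would confirm that $\iota$ really lands in $\SB(S^3(A))$. The main point is a dimension count: for a right ideal $I\subset A$ of dimension $3$, the tensor cube $I\otimes I\otimes I$ has dimension $27$, its $S_3$-invariants form a subspace of dimension $\binom{3+2}{2}=10$, and cutting with $e$ on either side yields a right ideal of $eBe=S^3(A)$ of dimension $10$, which is the minimal dimension for $M_{10}(F)$. The fact from \cite{S} that the $S_3$-action on $A^{\otimes 3}$ is inner is what makes $B$ and $eBe$ genuine $F$-subalgebras and ensures that $\iota$ is a morphism of varieties rather than a mere set map.

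Second, I would extend scalars to $\bar F$ to identify the induced map $\P^2_{\bar F}\to\P^9_{\bar F}$ with the cubic (Veronese-type) embedding $\ell\mapsto S^3\ell$. Under $A\otimes_F\bar F=\End_{\bar F}(\bar V)$, a minimal right ideal $I$ corresponds to a line $\ell\subset\bar V$; then $e(I\otimes I\otimes I)^{S_3}e$ is canonically $\End_{\bar F}(S^3\ell)$ sitting inside $\End_{\bar F}(S^3\bar V)$, i.e., the line $S^3\ell\subset S^3\bar V$ viewed as a point of $\P^9$. This embedding is cubic in the coordinates of $\P^2$, so it pulls $\mathcal{O}(1)_{\P^9}$ back to $\mathcal{O}(3)_{\P^2}$, giving the required base-change property.

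The main obstacle is ensuring that every identification above descends to $F$, in particular that $L=\iota^*\mathcal{O}(1)_{\P^9}$ is actually defined over $F$ and not merely over $\bar F$. This is automatic once one knows that $\iota$ itself is an $F$-morphism, which in turn rests on the inner realization of $S_3\subset(A^{\otimes 3})^*$: conjugating by an $F$-rational subgroup makes $B$, $e$, $eBe=S^3(A)$, and finally $\iota$ all Galois-stable, so $L$ is defined over $F$.
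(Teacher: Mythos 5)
Your proposal follows essentially the same route as the paper, which states Lemma~\ref{lem:1} as a summary of the immediately preceding construction: $S^3(A)=eBe$ is split of degree $10$, the map $I\mapsto e(I\otimes I\otimes I)^{S_3}e$ embeds $\SB(A)$ into $\SB(S^3(A))=\P^9$, over $\bar F$ this is the degree-three Veronese-type embedding $\P^2\to\P^9$ (which the paper loosely calls the Segre embedding), and the restriction of ${\cal O}(1)_{\P^9}$ therefore pulls back to ${\cal O}(3)_{\P^2}$. Your added details --- the dimension count for the image ideal and the observation that the inner realization of $S_3$ makes everything Galois-stable and hence defined over $F$ --- correctly fill in exactly the steps the paper leaves implicit.
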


The bundle ${\cal O}_{\P^2}(3)$ has a ten dimensional 
space, $\bar W$, of global sections which are the cubic forms in three variables. 
That is, if $s \in \bar W$, then the zeroes of $s$ 
are a cubic curve in $\P^2$, and for all $s$ in a Zariski 
open set this is a smooth cubic curve of genus one. 
Now $L$ has a space of global sections of dimension 10, $W$, 
over $F$ and so a $s \in W$ defines a 
cubic curve $C$ in $SB(A)$. Once again, for a Zariski 
open set of $s$ this $C$ is smooth and of genus one. 
Of course, except in the trivial case $A$ is split, 
$\SB(A)$ has no rational points and so $C$ has no rational 
points. However, we can define $E(C)$ as the Jacobian 
of $C$ and $E(C)$ is an elliptic curve. The question 
we attack in this paper is the question of which 
$E(C)$ appear for a given $A$, and given $E(C)$ which $C$ appear. 

Of course the group $\PGL_3(\bar F)$ acts on $\P^2(\bar F)$ 
and this action extends to an action on $\P^9(\bar F)$. 
Viewing this as an action of $\GL_3(\bar F)$, this 
induces an action on the global sections of ${\cal O}(3)$ 
which is just the change of variables action on the 
cubic forms. 

We need to descend this group action to $F$. 
Let $\bar G$ be the Galois group of $\bar F/F$. 
Of course $A \otimes_F \bar F = M_3(\bar F)$ 
so there is a Galois action of $\bar G$ on $\GL_3(\bar F)$ 
such that the $\bar G$ fixed elements are $A^*$. 
Hilbert's Theorem 90 shows that $G$ fixed elements of 
$\PGL_3(\bar F)$ lift to $\bar G$ fixed elements of 
$GL_3(\bar F)$, so $A^*/F^*$ is the group of 
$\bar G$ fixed elements of $\PGL_3(\bar F)$. 
Moreover, $A^*/F^*$ is the group of $F$ automorphisms 
of $A$ and hence $A^*/F^*$ acts on $\SB(A)$ and 
$\SB(S^3(A))$. Of course, after extending scalars to 
$\bar F$, this is just the action of $PGL_3(\bar F)$ on 
$\P^2$ and $\P^9$. 

\section{Classical Invariant Theory}

Let us review the classical invariant theory 
of cubic curves over $\bar F$. This material 
is very far from new, but it is useful to review so we can treat the non-algebraically closed case. 
As a general reference we suggest \cite[Chapter 3]{D}. 

Let $\bar T$ be the projective space of $\bar W$, that is, 
$\bar T = \P^9$. Then $\bar T$ can be viewed as the space of cubic 
curves. We begin by recalling some basic facts about 
a smooth cubic curve. Any line in $\P^2$ intersects 
$\bar C$ in three points, counting multiplicity. 
Recall that a point $P$ on $\bar C$ is called an inflection 
point if the tangent line to $\bar C$ at $P$ has an intersection 
with $\bar C$ that is at least multiplicity 3 (any tangent line 
has multiplicity 2). For the moment fix one inflection point 
$[0]$ on $\bar C$. Looking at all the intersections 
of lines with $\bar C$, we see that the embedding 
$\bar C \to \P^2$ is defined by the line bundle 
${\cal L}(3[0])$. 

The Jacobian $E(\bar C)$ of $\bar C$ is the group of 
degree 0 divisors modulo principal divisors. 
In particular, $E(\bar C)$ is an abelian group. 
If we fix $[0]$ as above, then $\bar C \cong E(\bar C)$ 
via $[P] \to [P] - [0]$. 
Moreover, there is a natural action $E(\bar C) \times \bar C
\to \bar C$ in which, for example, it is true 
that $[P] - [0]$ acting on $[0]$ is $[P]$. 
Thus $E(\bar C)$ is a subgroup of the automorphism 
group of $\bar C$ which we call the translations 
and $E(\bar C)$ acts transitively on $\bar C$. 

Since $\bar C \cong E(\bar C)$ a choice of $[0]$ 
imparts an additive group structure on $\bar C$. 
It will be useful to recall the classical construction 
of this structure. 
If $P,Q,R$ are three points 
on $\bar C$ then we say these points "sum to 
zero" if they are colinear and hence the intersection 
points of some line with $\bar C$. To make a group 
with zero $[0]$, for any point $[P]$, 
one defines $[-P]$ by the condition that 
$[P]$, $[-P]$, and $[0]$ are colinear. 
Then $[P] + [Q] = [R]$ if $[P]$, $[Q]$, and 
$[-R]$ are colinear. Thus "summing to zero" 
is independent of the choice of $[0]$. 

The automorphisms of $\bar C$ have the form 
$T \rtimes G$ where $T = E(\bar C)$ is the group of translations and 
$G$ is the group of automorphisms that fix $[0]$. 
Thus if $\bar C$ and $\bar C'$ 
are two isomorphic smooth cubic curves in $\P^2$, 
there is an isomorphism between them that preserves the embedding 
implying that there is an isomorphism between them 
that extends to $\P^2$. That is, $\bar C$ and $\bar C'$ 
are in the same $PGL_3(\bar F)$ orbit in $\P^9$. 
The quotient $\P^9/\PGL_3(\bar F)$ is birationally 
$\P^1$ and we will later remind the reader that there is 
an induced map map to the $j$ line which is generically 12 to 1. 
The action of $PGL_3(\bar F)$ on cubic curves will be key to 
our argument. 

Let $S^+ \subset PGL_3(\bar F)$ 
be the stabilizer of some $\bar C$. 
Then $S^+$ consists of the automorphisms of $\bar C$ 
which preserve the embedding in $\P^2$. 
All the elements of $G$ preserve the embedding, but only 
the elements of $T$ of order 3 preserve the embedding. 
It follows that $S^+ = S \rtimes G$ where $S$ is isomorphic to 
$\Z/3\Z \oplus \Z/3\Z$ and is the group of translations by 
three torsion elements of $E(\bar C)$. 
If the $j$ invariant of $\bar C$ 
is not $0$ or $1728$, then $G = S^+/S = \Z/2\Z$ and $G$ 
is generated by the $-1$ map on $C$ viewed as an elliptic curve 
with $[0]$ as the zero element.  
If $j = 0$ 
then $S^+/S = C_6$ the cyclic group of order 6, and if $j = 1728$ then 
$S^+/S = C_4$.  

Given $\bar C$, we let $I$ be the set of its 
nine inflection points. In the following discussion 
we have two goals. First, we observe that we can 
define a nine point "Jacobian", $E(I)$, for $I$ that does not 
involve $\bar C$. We accomplish this 
by restricting the argument reviewed above 
for all of $\bar C$. Second, we want to relate this 
$E(I)$ to the stabilizer of $\bar C$ in $\PGL_3(\bar F)$ 
and, in particular, to the group of translations. 
What we will show is that if we pick one point 
in $I$ to be the identity, then the other 8 points 
correspond to  
points on the elliptic curve $E(\bar C)$ of order 3. 
The next result describes some properties 
of $I$ where we assume some $\bar C$ exists but not anything 
specific about it. 

\begin{lemma} \label{lem:2}
Suppose $P$, $Q \in I$ are distinct. 
Then there is exactly one third point $
R \in I$ such that $P$, $Q$, $R$ 
are colinear in $\P^2$ and $R$ is distinct from both 
$P$ and $Q$. 
\end{lemma}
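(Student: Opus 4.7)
The plan is to use the group law on $\bar C$ to identify the inflection points as the $3$-torsion subgroup and then read off the conclusion from the addition formula.

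First, I would pick any inflection point of $\bar C$ and call it $[0]$, thereby giving $\bar C$ the elliptic curve structure reviewed in the preceding paragraphs. Recall the characterization of the group law: three points $X,Y,Z$ of $\bar C$ (counted with multiplicity) are the intersection of a line with $\bar C$ if and only if $X+Y+Z=0$. Applying this to the tangent line at a point $P$, which meets $\bar C$ in the divisor $2P+S$ for some $S$, we obtain $2P+S=0$, so $S=-2P$. The point $P$ is an inflection point precisely when $S=P$, equivalently $3P=0$. Hence $I$ is exactly the $3$-torsion subgroup of $\bar C$, and in particular $|I|=9$.

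Next, given distinct $P,Q\in I$, let $\ell$ be the (unique) line through $P$ and $Q$ in $\P^2$. Then $\ell\cap \bar C$ is a divisor of degree $3$ on $\bar C$ consisting of $P$, $Q$, and a third point $R$, characterized by $P+Q+R=0$, i.e.\ $R=-(P+Q)$. Since $3P=3Q=0$, we get $3R=-3(P+Q)=0$, so $R\in I$. This produces the required third point; uniqueness of $R$ is immediate because $\ell$ is determined by $P,Q$ and its divisor of intersection with $\bar C$ has degree exactly $3$.

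It remains to check $R\neq P$ and $R\neq Q$. If $R=P$, the line $\ell$ would be tangent to $\bar C$ at $P$, and the intersection divisor would be $2P+Q$, forcing $2P+Q=0$, i.e.\ $Q=-2P=P$ (using $3P=0$), contradicting $P\neq Q$. By symmetry, $R\neq Q$. This completes the argument. No step here is a real obstacle; the only subtlety worth flagging is the identification of inflection points with $3$-torsion, which is standard once one has the collinearity interpretation of the group law recalled in the paragraph preceding the lemma.
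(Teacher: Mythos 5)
Your proof is correct and follows essentially the same route as the paper: identify $\bar C$ with $E(\bar C)$ via an inflection point $[0]$, note that the line through $P$ and $Q$ meets $\bar C$ in a third point $R$ with $P+Q+R=0$ so that $R$ is $3$-torsion and hence in $I$, and rule out $R=P$ or $R=Q$ by the same torsion computation. The only difference is that you spell out the identification of $I$ with the $3$-torsion subgroup via the tangent-line computation, which the paper takes as known.
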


\begin{proof} We identify $\bar C$ with $E(\bar C)$ via 
a choice of $[0]$. 
A line in $\P^2$ meets any $\bar C$ in three points. 
Thus, $R \in \bar C$ is well defined. Moreover, 
$P + Q + R$ is zero in $\bar C$, and so $R$ must also be a 
three torsion point and hence an inflection point and hence in 
$I$. If, say, $R = Q$ then $P + 2Q = 0$ in $\bar C$ 
which implies $P = Q = R$.
\end{proof}

Because of the above we say three points of $I$ are colinear if they are distinct 
and colinear in $\P^2$, or are all the same. 

We can form a group from this relationship on $I$, 
by repeating for $I$ the construction of $E(\bar C)$ 
but restricted to these nine points. 
Let $E(I)$ be the set of equivalences classes 
of all pairs $(P,Q)$, $P,Q \in I$, where we say $(P,Q) \sim (R,S)$ 
if there is a single $T \in I$ such that $P,S,T$ and 
$R,Q,T$ are colinear. Note that all pairs $(P,P)$ are equivalent.  
We let $\{(P,Q)\}$ be the equivalence class containing $(P,Q)$. 

\begin{lemma} \label{lem:3}
\begin{enumerate}[label=(\alph*)]

\item If 
$(P,Q) \sim (R,S)$ then $(P,R) \sim (Q,S)$. 

\item If $(P,Q) \sim (R,Q)$ then $P = R$. 

\item For any $P,Q,R \in I$ there are unique $S, S' \in I$ 
such that $(P,Q) \sim (R,S) \sim (S',R)$. 

\item If $(P,Q) \sim (Q,P)$ then $P = Q$. 

\item If we fix $Q$ then the map $P \to \{(P,Q)\}$ is a bijection 
between $I$ and $E(I)$. 
\end{enumerate}
\end{lemma}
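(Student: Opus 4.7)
The plan is to reduce every part of the lemma to an elementary identity in the abelian group $E[3]$ of three-torsion points of $E(\bar C)$, which is isomorphic to $(\Z/3\Z)^2$. The key translation is that, after choosing an auxiliary origin, the abstract relation ``$(P,Q)\sim(R,S)$'' becomes nothing but the additive identity $P-Q=R-S$, and from there each item is a one-line verification.

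Concretely, I would first fix any $[0]\in I$ and use it to identify $\bar C$ with $E(\bar C)$, so that the nine inflection points become exactly $E[3]$ and the collinearity relation defined after Lemma~\ref{lem:2} becomes ``$P+Q+R=0$ in $E[3]$.'' Under this identification, the condition that there exist $T\in I$ with $P,S,T$ colinear and $R,Q,T$ colinear is equivalent to $T=-(P+S)=-(R+Q)$, i.e. to the single identity $P-Q=R-S$. Since $E[3]$ is closed under sums, the required $T$ automatically lies in $I$. Thus the equivalence class $\{(P,Q)\}$ is precisely the element $P-Q\in E[3]$, and $E(I)$ is canonically identified with $E[3]$.

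With this translation, each claim becomes routine. (a) is $P-Q=R-S\iff P-R=Q-S$. (b) is $P-Q=R-Q\Rightarrow P=R$. For (c), the unique solutions to $P-Q=R-S$ and $P-Q=S'-R$ are $S=R+Q-P$ and $S'=R+P-Q$, both lying in $E[3]$. (d) uses that $P-Q=Q-P$ forces $2(P-Q)=0$, which in a group of exponent three forces $P=Q$. Finally, (e) follows because, for fixed $Q$, the map $P\mapsto P-Q$ is a bijection $E[3]\to E[3]$, so $P\mapsto\{(P,Q)\}$ is a bijection $I\to E(I)$.

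The only nontrivial content is the identification of $I$ with $E[3]$ together with the fact that collinearity corresponds to summing to zero; everything else is formal. Because the lemma is a statement over $\bar F$, no Galois descent is needed here, so the main ``obstacle'' is really just the bookkeeping of confirming that the auxiliary choice of $[0]$ does not affect the conclusions, which is clear since the statements of (a)--(e) depend only on differences $P-Q$.
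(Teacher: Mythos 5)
Your proof is correct, but it takes a different route from the paper's. The paper argues synthetically, directly from the colinearity relation on $I$: part (a) is read off the symmetry of the defining condition, (b) and (c) come from the uniqueness of the third colinear point (Lemma~\ref{lem:2}), and (d) from the fact that only $P$ itself is colinear with $P,P$ (since $P$ is an inflection point). You instead choose an origin $[0]\in I$, identify $I$ with $E[3]\cong(\Z/3\Z)^2$ so that colinearity becomes ``sums to zero,'' show that $(P,Q)\sim(R,S)$ is exactly the identity $P-Q=R-S$, and reduce all five parts to trivial group arithmetic. This is legitimate here because the paper explicitly assumes some $\bar C$ exists for this discussion (its own proof of Lemma~\ref{lem:2} already uses the group law), and the statements (a)--(e) concern only $I$ and $\sim$, which are defined without reference to $\bar C$ or $[0]$, so the auxiliary choices wash out as you say. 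Your approach buys something the paper leaves implicit: it shows at once that $\sim$ as literally defined is already transitive, hence an honest equivalence relation, and it makes the eventual identification $E(I)\cong E[3]$ (Corollary~\ref{cor:6}) visible immediately. What the paper's synthetic route buys is conceptual economy for its stated goal of building $E(I)$ intrinsically from the colinearity structure alone. One small point you should make explicit: the paper's convention declares a triple with exactly two equal entries non-colinear, so you need the matching observation that $P+S+T=0$ in $E[3]$ with exactly two of the three equal is impossible (if $P=S\neq T$ then $T=-2P=P$, a contradiction); this is the same computation as in the paper's proof of Lemma~\ref{lem:2} and closes the only gap in your dictionary between colinearity and ``sums to zero.''
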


\begin{proof} 
Part a) is direct from the 
definition. For b), we have $P,Q,T$ and $R,Q,T$ 
colinear and so $P = R$.
Turning to c), we have $R,Q,T$ colinear for a unique $T$ and define 
$S$ by taking $P,T,S$ colinear (defining $S'$ is similar). 
For d), only $P$ is colinear with $P$ and $P$ 
($P$ is an inflection point). Part e) follows 
from a) and b). 
\end{proof}

We can now proceed to define the group structure on $E(I)$. 
We say $\{(P,Q)\} + \{(R,S)\} = \{(P,U)\}$ if $(R,S) \sim (Q,U)$. 
One can check this is well defined with identity the class of $(P,P)$.  
Also $\{(P,Q)\} + \{(Q,R)\} = \{(P,R)\}$ 
so $\{P,Q\} +\{Q,P\} = \{P,P\}$. Since colinearity is a symmetric 
relationship this defines an abelian group. Finally, and 
importantly, there is a 
natural action of $E(I)$ on $I$ defined by $P + \{(Q,P)\} = Q$. 

\begin{lemma} \label{lem:4}
\begin{enumerate}[label=(\alph*)]
\item If $\alpha \in E(I)$ then $3\alpha = 0$.

\item If $\alpha \in E(I)$ then $(P,Q) \sim (P + \alpha, Q + \alpha)$.

\item $P,Q,R$ are colinear if and only if for any (hence all) S, 
$\{(P,S)\} + \{(Q,S)\} + \{(R,S)\} = 0$. In particular, 
$P$, $Q$, and $R$ are colinear if and only if $(Q,P) \sim (P,R)$. 

\item If $P,Q,R$ are colinear then so is $P + \alpha, Q + \alpha, R + \alpha$ 
for any $\alpha \in E(I)$. 

\item If $P \in I$ and $\alpha \in E(I)$, then 
$P$, $P + \alpha$ and $P + 2\alpha$ are colinear. 
\end{enumerate}
\end{lemma}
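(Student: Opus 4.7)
The plan is to reduce all five items to arithmetic in $E(\bar C)[3]$ by exhibiting a group isomorphism $E(I) \cong E(\bar C)[3]$. Fix an inflection point $[0] \in I$ and use it to identify $\bar C$ with the Jacobian $E(\bar C)$ as in the preceding discussion. Since $[0]$ is an inflection point, three points $P,Q,R \in \bar C$ are colinear if and only if $P+Q+R=0$ in the group law; in particular $I$ coincides with the 3-torsion subgroup $E(\bar C)[3] \cong (\Z/3\Z)^2$ as a set, and thereby acquires an abelian group structure.

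The key step is to check that $\varphi \colon I \to E(I)$, $P \mapsto \{(P,[0])\}$, which is already a bijection by Lemma \ref{lem:3}(e), is a group homomorphism. Unwinding the addition law in $E(I)$, one has $\varphi(P) + \varphi(Q) = \{(P,U)\}$ where $(Q,[0]) \sim ([0],U)$; that equivalence requires a $T \in I$ with $Q,U,T$ and $[0],[0],T$ colinear. By the paper's convention on colinearity with repetition, the second condition forces $T = [0]$, and then the first gives $Q + U + [0] = 0$, i.e., $U = -Q$ in $E(\bar C)[3]$. Moreover $(P,-Q) \sim (P+Q,[0])$, as witnessed by $T = -P$. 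Hence $\varphi(P) + \varphi(Q) = \varphi(P+Q)$, so $\varphi$ is an isomorphism; under it $\{(P,Q)\}$ corresponds to $P-Q$, and the action of $E(I)$ on $I$ matches ordinary addition.

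Each part is now a routine computation in $(\Z/3\Z)^2$. Part (a) holds because every element of $(\Z/3\Z)^2$ has order dividing $3$. For (b), $(P+\alpha)-(Q+\alpha) = P-Q$. For (c), $\{(P,S)\} + \{(Q,S)\} + \{(R,S)\}$ corresponds to $(P+Q+R) - 3S = P+Q+R$, which vanishes iff the three points are colinear; specializing $S = P$ yields $\{(Q,P)\} = \{(P,R)\}$, i.e., $(Q,P) \sim (P,R)$. Part (d) is the translation invariance of the sum-to-zero condition and hence of colinearity. Part (e) is the identity $P + (P+\alpha) + (P+2\alpha) = 3P + 3\alpha = 0$.

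The only place that requires care is verifying that $\varphi$ is a homomorphism, since that depends on the slightly delicate forcing $T = [0]$ from the degenerate colinear triple $[0],[0],T$. Once that is in hand, everything reduces to elementary arithmetic in an elementary abelian 3-group.
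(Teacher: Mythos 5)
Your proof is correct, but it takes a genuinely different route from the paper's. The paper argues entirely intrinsically: each part is deduced from the combinatorial axioms already established in Lemma~\ref{lem:3} and from the definition of addition on $E(I)$, never invoking the curve. For instance, (a) comes from $3\{(P,Q)\} = \{(P,Q)\}+\{(Q,R)\}+\{(R,P)\} = \{(P,P)\}$ for a colinear triple $P,Q,R$, and (e) comes from unwinding $\alpha = \{(Q,P)\} = \{(R,Q)\}$ directly. You instead fix a base inflection point $[0]$, identify $I$ with $E(\bar C)[3] \cong \Z/3\Z \oplus \Z/3\Z$ via the classical ``colinear iff sum to zero'' criterion, verify that $P \mapsto \{(P,[0])\}$ is a group isomorphism (the computation with the witnesses $T=[0]$ and $T=-P$ is right, and the degenerate cases are covered by the paper's convention that a colinear triple in $I$ is either all distinct or all equal), and then read off all five statements as arithmetic in an elementary abelian $3$-group. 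Your approach front-loads the work into one isomorphism and makes the five items immediate; its cost is that it presupposes a particular $\bar C$ containing $I$, whereas the paper's intrinsic treatment is what later makes $E(I)$ manifestly independent of the choice of $\bar C$ (Corollary~\ref{cor:6}) and well behaved under descent when $I$ is defined over $F$ but no $C$ is. Since the paper explicitly assumes at this point that some $\bar C$ exists, your argument is legitimate; it essentially proves the content of Corollary~\ref{cor:6} first and derives Lemma~\ref{lem:4} from it, inverting the paper's logical order.
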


\begin{proof}
To begin with a), if $P,Q,R$ are colinear then 
$(P,Q) \sim (Q,R) \sim (R,P)$ and $3\{P,Q\} = \{(P,Q)\} + \{(Q,R)\} + \{(R,P)\} =  \{P,P\}$. 
As for b), if 
$\alpha$ contains $(S,P)$ and $(T,Q)$ then $(S,P) \sim (T,Q)$ 
so $(P,Q) \sim (S,T)$.  
Turning to c), the independence from the choice of $S$ is clear from 
adding $3\{(S',S)\}$ to both sides. Thus we need only consider 
$\{(P,P)\} + \{(Q,P)\} + \{(R,P)\} = 0$ or 
$\{(Q,P)\} + \{(R,P)\} = 0$ or $(Q,P) \sim (P,R)$ which 
means the line through $Q$ and $R$ also goes through 
$P$. Part d) is now clear. As for 3), let $\alpha = \{(Q,P)\} 
= \{(R,Q)\}$ so $P,Q,R$ are colinear. Then $P + \alpha = 
Q$ and $2\alpha = \{(Q,P)\} + \{(R,Q)\} = \{(R,P)\}$ 
so $P + 2\alpha = R$. 
\end{proof}

Let us, briefly, assume $F$ is not necessarily equal to $\bar F$. 
Suppose $C \supset I$ is a cubic curve defined over $F$ with Jacobian $E$ and 
$I$ are the inflection points. 
Then $E \times C \cong C \times C$ via the morphism $(\alpha,P) \to (P + \alpha, P)$. 
Thus there is a projection $C \times C \to E$ that maps $(P,Q) \to [P] - [Q]$ 
where $[P] - [Q]$ is the degree-zero divisor on $C$. The diagram:
$$
\begin{matrix}
I \times I &\subset& C \times C\\
\downarrow&&\downarrow\\
E(I)& \subset & E(C)
\end{matrix}$$
commutes. All of this makes sense over $\bar F$ where $I \times I$ 
is a zero dimensional reduced variety. Of course, $I \times I$ is defined over 
any field where $I$ is defined. The above diagram imparts to 
$E(I)$ a variety structure defined over any field $F$ where 
$C$ and hence $I$ are defined, though of course when $F \not= \bar F$, 
then $K_1 = \Spec(E(I))$ and $K_2 = \Spec(I \times I)$ are direct sums of fields, 
perhaps strictly containing $F$. 
If $I$ is defined over $F$ but $C$ is not, we can proceed as follows. 
The $\bar F$ points of $I \times I$ are morphisms $\phi_{P,Q}:K_2 \to \bar F$ and there 
is one for each $P,Q$. Then we can define $K_1$ to be $\{f \in K_2 | 
\phi_{P,Q}(f) = \phi_{R,S}(f)\ $whenever$\ (P,Q) \sim (R,S)\}$. 
Since this is an linear condition on $f$ this is compatible with the 
definition of $K_1$ over $F$'s where $C$ is not defined. All of which 
is saying:

\begin{lemma} \label{lem:5} 
$I \times I \to E(I)$ is a morphism of zero dimensional varieties whenever $I$ is defined. 
\end{lemma}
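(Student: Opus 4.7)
The plan is to reduce the claim to two things: (i) the relation $\sim$ on pairs from $I$ is preserved by the absolute Galois group $\bar G$, and (ii) the subset $K_1 \subset K_2$ defined by the equations $\phi_{P,Q}(f) = \phi_{R,S}(f)$ (for $(P,Q)\sim(R,S)$) is in fact an $F$-subalgebra. Once these are in hand, the inclusion $K_1 \hookrightarrow K_2$ dualizes to a morphism $I \times I \to E(I)$ of zero dimensional $F$-schemes, which is what the lemma asks for.

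First I would verify (i). The relation $(P,Q) \sim (R,S)$ is witnessed by a point $T \in I$ with $P,S,T$ and $R,Q,T$ colinear in $\P^2$. Since $I$ is $F$-defined, $\sigma T \in I$ for every $\sigma \in \bar G$, and colinearity is a Galois-invariant incidence condition in $\P^2$, so $\sigma T$ witnesses $(\sigma P,\sigma Q) \sim (\sigma R,\sigma S)$. Consequently the $\bar G$-action on $I \times I$ descends to an action on the quotient set $E(I)$, and the collection of defining equations for $K_1$ is stable under $\bar G$.

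For (ii), closure of $K_1$ under multiplication is automatic because each $\phi_{P,Q}$ is a ring homomorphism $K_2 \to \bar F$, so any condition $\phi_{P,Q}(f)=\phi_{R,S}(f)$ is preserved by sums and products in $K_2$. Combined with the Galois stability of the defining set of equations from (i), this shows $K_1$ is an $F$-subalgebra of $K_2$. Setting $E(I) = \Spec(K_1)$ produces the desired zero dimensional $F$-variety, and the ring inclusion $K_1 \hookrightarrow K_2$ is exactly the morphism $I \times I \to E(I)$ sending $(P,Q) \mapsto \{(P,Q)\}$ on $\bar F$-points.

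It remains to check that this $F$-structure coincides with the one inherited from the Jacobian whenever $C$ happens to be $F$-defined as well. Here I would invoke the commutative diagram stated just before the lemma: the projection $C \times C \to E$, $(P,Q) \mapsto [P]-[Q]$, is an $F$-morphism and restricts to $I \times I \to E(I)$. By Lemma \ref{lem:4}(c), $[P]-[Q] = [R]-[S]$ in $E(\bar F)$ precisely when $(P,Q) \sim (R,S)$, so the two realizations of $E(I)$ as an $F$-variety — as a subscheme of $E$ and as $\Spec(K_1)$ — agree. The main technical point I expect to dwell on is (ii), confirming that the $\bar F$-linear description of $K_1$ actually descends to $F$ and is closed under multiplication; this is exactly where the Galois equivariance from (i) does the work.
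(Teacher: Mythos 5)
Your proposal is correct and follows essentially the same route as the paper: the paper's ``proof'' is the discussion preceding the lemma, which defines $K_1 \subset K_2$ as the subalgebra cut out by the equalities $\phi_{P,Q}(f)=\phi_{R,S}(f)$ for $(P,Q)\sim(R,S)$ and notes its compatibility with the Jacobian-based $F$-structure when $C$ is defined over $F$. You merely make explicit two points the paper leaves implicit --- the Galois stability of $\sim$ and the fact that $K_1$ is an $F$-subalgebra rather than just an $\bar F$-linear subspace --- which is a harmless (indeed helpful) elaboration, not a different argument.
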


\begin{corollary} \label{cor:6}
Suppose $\bar C'$ also contains $I$. 
Then $I$ forms the set of inflection points of $\bar C'$ also. 
$E(I)$ can be identified with the three torsion points of both 
the Jacobians of $\bar C$ and $\bar C'$. 
\end{corollary}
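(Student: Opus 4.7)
The plan is to show first that every $P\in I$ is an inflection point of $\bar C'$ by a linear-equivalence computation on $\bar C'$; the identification of $E(I)$ with $\Jac(\bar C')[3]$ then follows at once, since Lemmas~\ref{lem:3} and \ref{lem:4} built $E(I)$ purely from the collinearity structure of $I\subset\P^2$, with no reference to any particular cubic. If $\bar C = \bar C'$ there is nothing to prove, so assume they are distinct. Then they share no common component, and Bezout produces an intersection cycle of degree $9$; since $I\subset \bar C\cap \bar C'$ and $|I|=9$, the intersection is exactly $I$ with multiplicity one at each point. Viewed as a divisor on $\bar C'$, the effective divisor $\sum_{P\in I}P$ is therefore a cubic section and hence linearly equivalent to $3H$, where $H$ is the hyperplane class in $\Pic(\bar C')$.

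Now fix $P\in I$. By Lemma~\ref{lem:4}(e), each of the four order-$3$ subgroups of $E(I)\cong(\Z/3\Z)^2$ determines a collinear triple $\{P,P+\alpha,P+2\alpha\}$, giving four distinct lines $\ell_1,\dots,\ell_4$ through $P$ whose further intersections with $I$ together exhaust $I\setminus\{P\}$ (two points per line). Since each $\ell_i$ meets $\bar C'$ in the three distinct points $P,Q_i,R_i\in I$, it is transversal to $\bar C'$ at $P$. The quartic $\ell_1\ell_2\ell_3\ell_4$ then has multiplicity four at $P$, and its intersection cycle with $\bar C'$ equals $4P+\sum_{Q\in I\setminus\{P\}}Q$, which is linearly equivalent to $4H$ (any quartic section of $\bar C'$ is $\sim 4H$). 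Combining gives $3P+\sum_{Q\in I}Q\sim 4H$, and subtracting the equivalence $\sum_{Q\in I}Q\sim 3H$ from the previous paragraph yields $3P\sim H$ on $\bar C'$. This identity says that $3P$ is a hyperplane section, i.e.\ the tangent line to $\bar C'$ at $P$ meets $\bar C'$ with intersection multiplicity three at $P$, so $P$ is an inflection point of $\bar C'$. Since $|I|=9$ equals the total count of inflection points on a smooth plane cubic, $I$ is exactly the inflection set of $\bar C'$; the intrinsic construction of $E(I)$ then gives $E(I)\cong\Jac(\bar C)[3]\cong\Jac(\bar C')[3]$ via the same prescription.

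The only delicate step is the intersection-multiplicity bookkeeping: the Bezout calculation is forced because $|I|$ saturates the $9=\deg\bar C\cdot\deg\bar C'$ bound, and the quartic calculation relies on each $\ell_i$ carrying three distinct points of $\bar C'$, which forbids tangency to $\bar C'$ at $P$ and pins the multiplicity at $P$ to exactly four.
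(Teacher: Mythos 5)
Your argument is correct, and it supplies a proof that the paper in fact omits: Corollary~\ref{cor:6} is stated with no proof, the second assertion being treated as immediate because $E(I)$ is built in Lemmas~\ref{lem:2}--\ref{lem:5} purely from the collinearity structure of $I$ in $\P^2$, with no reference to a particular cubic. The genuinely nontrivial first assertion --- that $I$ is again the \emph{full} set of inflection points of $\bar C'$ --- is only certified in the paper much later and by a different route, namely the Hessian-pencil computation underlying Theorem~\ref{theom:12}: every cubic containing $I$ lies in the pencil spanned by $v_0^*v_1^*v_2^*$ and $(v_0^*)^3+(v_1^*)^3+(v_2^*)^3$, and one reads off that the smooth members all have $I$ as their flexes. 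Your route is local and coordinate-free: Bezout forces $\bar C\cdot\bar C'=\sum_{P\in I}P$ with multiplicity one (since $|I|$ saturates the degree bound $9$), the four lines through a fixed $P$ coming from the four order-$3$ subgroups of $E(I)$ give the quartic relation $4P+\sum_{Q\neq P}Q\sim 4H$, and subtracting yields $3P\sim H$, i.e.\ $P$ is a flex of $\bar C'$; the count of nine flexes (valid in characteristic $\neq 3$, which the paper assumes) finishes it. This buys a self-contained proof at the point of use, independent of the choice of Hessian normal form and of the explicit description of the $\tilde S$-invariants in $S^3(V^*)$, at the cost of the intersection-multiplicity bookkeeping you flag; the paper's eventual computation buys more (the full description of the pencil $L_I$), which it needs anyway for Theorem~\ref{theom:12}. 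The only hypotheses you use implicitly --- that $\bar C'$ is smooth and the characteristic is not $3$ --- are already in force in the paper, so there is no gap.
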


We know classically that the action of $E(I)$ on $I$ 
extends first to an action on $\bar C$, namely translation 
by three torsion points, and further therefore to an action on 
$\P^2$. Thus $E(I)$ corresponds to a subgroup $S \subset \PGL_3(\bar F)$. 
We can give more detail about $S$, and this is well known 
(see \cite{O1} p. 131). Let $F_3$ be the field 
of three elements. 
From the definition $S$ is generated by $x,y$ each of order three 
which commute in $PGL_3(\bar F)$ and represent a choice of 
basis, $\alpha, \beta$ over $F_3 = \Z/3\Z$ of $E(I)$. 
Let $\tilde x, \tilde y \in GL_3(\bar F)$ be preimages of $x,y$ 
where we can assume (over $\bar F$), that 
$\tilde x^3 = \tilde y^3 = 1$. Up to conjugation, there are two 
possibilities. Either $\tilde x$ and $\tilde y$ commute or 
$\tilde x{\tilde y} = \rho\tilde y\tilde x$ for a nontrivial 
$3$ root of one $\rho$. We will argue the that later holds, a 
classical fact related to the existence of the Weil pairing. 

Fix $P \in I$. Then $P, P + \alpha, P + 2\alpha$ are colinear;
as are $P + \beta, P + \beta + \alpha$, and $P + \beta + 2\alpha$ 
as well as $P + 2\beta, P + 2\beta + \alpha, P + 2\beta + 2\alpha$. 
All of these lines are left invariant by $\alpha$. 
Note that these are three distinct lines because any line contains 
at most three points of $I$. Lifting to $GL_3(\bar F)$ acting on 
$\A^3$, there are three distinct two-dimensional subspaces 
preserved by $\tilde x$. But $\tilde x$ has three distinct eigenvalues 
and so these are precisely all the two-dimensional subspaces 
preserved by $\tilde x$. We can make the same argument 
for $\tilde y$ and conclude that $\tilde y$ 
preserves the spaces associated to the 
lines $P, P + \beta, P + 2\beta$, as well as 
$P + \alpha, P + \alpha + \beta, P + \alpha + 2\beta$ and 
$P + 2\alpha, P + 2\alpha + \beta, P + 2\alpha + 2\beta$. 
Note that the spaces preserved by $\tilde x$ and $\tilde y$ 
are independent of the choice of $\tilde x$, $\tilde y$ and 
only depend on $x$ and $y$. 

\begin{lemma} \label{lem:7}
 Fix a third root of unity $\rho$. After 
perhaps changing $y$ to $y^2$, we have 
$\tilde x\tilde y = \rho\tilde y\tilde x$. The action of $\tilde y$ 
permutes the eigenspaces of $x$ and vice versa. 
If $V_1,V_2,V_3$ are the two-dimensional spaces preserved by $\tilde x$ and $W_1,W_2,W_3$ 
the same for $\tilde y$, then points of $I$ correspond 
to the nine intersections $V_i \cap W_j$. $S$ acts transitively 
on $I$. 
\end{lemma}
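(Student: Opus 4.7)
The plan is to identify the commutation constant $\lambda$ defined by $\tilde x\tilde y=\lambda\tilde y\tilde x$ and then use it to analyze the eigenspace picture. Since the lifts already satisfy $\tilde x^3=\tilde y^3=1$ and $x,y$ commute in $\PGL_3(\bar F)$, $\lambda\in\bar F^*$; an easy induction gives $\tilde x^n\tilde y=\lambda^n\tilde y\tilde x^n$, so taking $n=3$ forces $\lambda^3=1$. Thus $\lambda\in\{1,\rho,\rho^2\}$, and the real work is to rule out $\lambda=1$. Once that is done, $\lambda=\rho$ can be arranged by replacing $\tilde y$ with $\tilde y^{-1}=\tilde y^2$, which replaces $\lambda$ by $\lambda^{-1}$.

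The main obstacle is eliminating $\lambda=1$, and I would do this by a contradiction argument combining the linear algebra of commuting diagonalizable matrices with the combinatorics of $E(I)$. If $\tilde x,\tilde y$ commuted, then because each has three distinct eigenvalues (as already noted before the lemma), they would share an eigenbasis, and the three $V_i$ and three $W_j$ would coincide as sets of subspaces. Projectively this means the three $x$-invariant lines and the three $y$-invariant lines of $\P^2$ are the same three lines $\ell_1,\ell_2,\ell_3$. By Lemma~\ref{lem:4}(e), for each $P\in I$ the triple $\{P,P+\alpha,P+2\alpha\}$ is collinear on an $x$-invariant line, while $\{P,P+\beta,P+2\beta\}$ is collinear on a $y$-invariant line; under the assumed coincidence these are the same $\ell_i$, so $\ell_i\cap I$ is simultaneously a coset of $\langle\alpha\rangle$ and a coset of $\langle\beta\rangle$ in $E(I)\cong(\Z/3\Z)^2$. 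This forces $\langle\alpha\rangle=\langle\beta\rangle$, contradicting that $\alpha,\beta$ form a basis.

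With $\tilde x\tilde y=\rho\tilde y\tilde x$ in hand, the remaining claims become formal. For an eigenvector $v$ of $\tilde x$ with eigenvalue $\mu$, the computation $\tilde x(\tilde y v)=\rho\tilde y(\tilde x v)=\rho\mu(\tilde y v)$ shows that $\tilde y$ sends the $\mu$-eigenspace of $\tilde x$ to the $\rho\mu$-eigenspace, hence cyclically permutes the three eigenspaces of $\tilde x$; by symmetry $\tilde x$ permutes those of $\tilde y$. Since the $V_i$ are the pairwise sums of $\tilde x$-eigenspaces, $\tilde y$ permutes the $V_i$ without fixing any, so $V_i\ne W_j$ for every pair and each $V_i\cap W_j$ is one-dimensional, giving nine distinct points of $\P^2$. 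Each $P\in I$ lies on the unique $V_i$ containing its $\alpha$-orbit and the unique $W_j$ containing its $\beta$-orbit, so $P\mapsto V_i\cap W_j$ is a well-defined injection between nine-element sets and therefore a bijection. Finally, transitivity of $S$ on $I$ is built into the definition: for any $P,Q\in I$ the class $\{(Q,P)\}\in E(I)=S$ sends $P$ to $Q$, so the action is (in fact simply) transitive.
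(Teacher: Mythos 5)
Your proof is correct and follows essentially the same route as the paper: the paper likewise rules out the commuting case by noting that commuting lifts with distinct eigenvalues would force the three $\tilde x$-invariant planes to coincide with the three $\tilde y$-invariant planes, which contradicts the fact (established just before the lemma) that these correspond to the $\langle\alpha\rangle$-cosets and $\langle\beta\rangle$-cosets of $I$ respectively. You simply make explicit several steps the paper leaves to the reader (that the commutator scalar is a cube root of unity, the coset argument, and the counting of the nine intersections), which is a faithful elaboration rather than a different argument.
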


\begin{proof} This is well known and appears in 
\cite{O1} p. 131. We include a proof because the details of the proof 
will be useful. If $\tilde x$, $\tilde y$ commuted they would have the same 
eigenspaces and hence the same preserved two-dimensional subspaces. 
This proves the first statement. The second is now clear and the 
third statement just says, as observed above, that each of the lines 
preserved by $x$ and $y$ share one point of $I$. It is immediate 
that $S$ acts transitively.
\end{proof} 

Thus associated to $I$ is a subgroup $S \subset PGL_3(\bar F)$ 
where $S$ is the image of $\tilde S \subset GL_3(\bar F)$ 
and $\tilde S$ is generated by $\tilde x$, $\tilde y$ 
with $\tilde x^3 = 1$, $\tilde y^3 = 1$, and $\tilde x\tilde y = 
\rho\tilde y\tilde x$. All such $\tilde S$ are conjugate in 
$GL_3(\bar F)$, and so all such $S$ are conjugate in $PGL_3(\bar F)$. 

The first consequence of Lemma~\ref{lem:7} is that $S$ has a pairing $$S \times S \to <\rho>$$ 
defined by the map $(x,y) \to \rho^i$ where $x,y$ lift to $\tilde x, \tilde y \in GL_3$ 
and $\tilde x\tilde y = \rho^i\tilde y\tilde x$. It follows that the 
preimage of $S$ in $GL_3(\bar F)$ contains the Heisenberg group of order 27. 
That is, if we choose $\tilde S$ to be generated by preimages $\tilde x$, 
$\tilde y$ with $1 = \tilde x^3 = \tilde y^3$, then $\tilde S$ is the Heisenberg 
group. 

The identification of $S$ with $E(I)$ via their action on $I$ 
therefore amounts to an isomorphism $\psi: S \to E(I)$ where $\psi(s)$ is the equivalence 
class containing $(s(P),P)$. We have: 

\begin{lemma} \label{lem:8}
 $\psi$ is well defined and is a group isomorphism. 
Moreover, as a subset of $PGL_3$, $S$ induces the trivial action on $E(I)$. 
\end{lemma}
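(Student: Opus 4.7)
The plan is to identify $\psi$ with the inverse of the classical translation homomorphism $\phi\colon E(I) \to \PGL_3(\bar F)$ whose image is, by the definition preceding Lemma~\ref{lem:7}, exactly $S$. Recall that $\phi(\alpha)$ is the element of $\PGL_3(\bar F)$ obtained by extending the translation map $[P]\mapsto [P]+\alpha$ on $\bar C$ to all of $\P^2$. It is a group homomorphism since translations compose additively on $\bar C$, and it is injective because a nontrivial translation of an elliptic curve has no fixed points. Hence $\phi\colon E(I) \to S$ is a group isomorphism.

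I then verify that $\psi = \phi^{-1}$, which will simultaneously establish well-definedness, the homomorphism property, and bijectivity. Given $s \in S$, write $s = \phi(\alpha)$ for the unique $\alpha \in E(I)$. Restricted to $I \subset \bar C$, the map $s$ is translation by $\alpha$, so $s(P) = P + \alpha$ for every $P \in I$. The defining relation of the $E(I)$-action on $I$ says exactly that $P + \alpha = s(P)$ is equivalent to $\alpha = \{(s(P), P)\}$. Therefore $\psi(s) = \alpha$ independently of the choice of $P$, and $\psi = \phi^{-1}$ is a group isomorphism.

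For the last assertion, the action of $s \in S \subset \PGL_3(\bar F)$ on $E(I)$ is the one induced by the action on $I$, namely $s \cdot \{(P,Q)\} = \{(s(P), s(Q))\}$; this is well-defined because $s$, as an element of $\PGL_3(\bar F)$, permutes $I$ and preserves colinearity, hence preserves the equivalence $\sim$. Writing $s = \phi(\alpha)$ once more, we have $s(P) = P + \alpha$ and $s(Q) = Q + \alpha$, so Lemma~\ref{lem:4}(b) gives $(s(P), s(Q)) = (P + \alpha, Q + \alpha) \sim (P, Q)$, which shows the induced action is trivial.

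The argument is really just an unwinding of definitions; the only mild subtlety is keeping straight the three compatible actions in play, namely $S$ on $I$, $E(I)$ on $I$, and $S$ on $E(I)$. Once $\psi$ is recognised as $\phi^{-1}$, the whole lemma reduces to this recognition together with a single appeal to Lemma~\ref{lem:4}(b).
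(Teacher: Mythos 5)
Your proof is correct, and it follows exactly the route the paper intends: the paper states Lemma~\ref{lem:8} without a proof, treating it as immediate from the preceding identification of $S$ with the translation action of $E(I)$ extended to $\P^2$, together with Lemma~\ref{lem:4}(b). Your write-up simply supplies the details the paper leaves implicit (recognising $\psi$ as the inverse of the translation homomorphism $E(I)\to S$ and invoking $(P,Q)\sim(P+\alpha,Q+\alpha)$ for the triviality of the induced action), and it does so correctly.
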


A consequence of Lemma~\ref{lem:8} is that $E(I)$ has a pairing inherited from $S$. 
We quote a result from \cite[Chapter 3]{D}. 

\begin{theorem} \label{classical} 
The stabilizer of $I$ in $\PGL_3(\bar F)$ has the form 
$$S \rtimes SL_2(F_3).$$ 
\end{theorem}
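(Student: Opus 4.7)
The plan is to identify $H := \mathrm{Stab}_{\PGL_3(\bar F)}(I)$ with an extension of $SL_2(F_3)$ by $S$ via the induced action on $E(I)$, and then split that extension using point stabilizers in $I$. Throughout I use the group structure on $E(I)$ built in Lemma~\ref{lem:3} and the identification $\psi: S \to E(I)$ of Lemma~\ref{lem:8}.

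Every $h \in H$ is a projective transformation and so preserves colinearity of triples in $I$. Hence it preserves the equivalence relation defining $E(I)$ and induces a group automorphism $h_*$ satisfying $h(P+\alpha) = h(P) + h_*(\alpha)$. This identity immediately yields $h s h^{-1} = \psi^{-1}(h_*\psi(s))$ for $s \in S$, so $S \triangleleft H$, and the map $\phi: H \to \Aut(E(I))$, $h \mapsto h_*$, contains $S$ in its kernel by Lemma~\ref{lem:8}. Conversely, if $h_* = \mathrm{id}$ then $(h(P), h(Q)) \sim (P, Q)$ for all $P, Q \in I$; by Lemma~\ref{lem:3}(a) the element $\alpha := \{(h(P), P)\}$ is independent of $P$, so $h$ acts on $I$ as translation by $\alpha$. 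Since the nine points of $I$ contain four in general position (an easy check in the Hesse configuration), any element of $\PGL_3(\bar F)$ is determined by its restriction to $I$, forcing $h = \psi^{-1}(\alpha) \in S$. Furthermore, a lift $\tilde h \in GL_3(\bar F)$ conjugates the Heisenberg preimage of $S$ from Lemma~\ref{lem:7} to itself, and commutators are invariant under conjugation, so $h_*$ preserves the nondegenerate alternating pairing $E(I) \times E(I) \to \langle \rho \rangle$. Thus $\bar\phi : H/S \hookrightarrow \mathrm{Sp}_2(F_3) = SL_2(F_3)$.

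The main obstacle is the surjectivity of $\bar\phi$. I plan to produce enough elements by varying $\bar C'$ over the smooth cubics with flex set $I$ (the Hesse pencil through $I$). For any such $\bar C'$, the review before Lemma~\ref{lem:2} gives its stabilizer in $\PGL_3(\bar F)$ as $S \rtimes G$, where $G = \Aut(\bar C', [0])$ embeds faithfully into $\Aut(E(I))$ (the action on $3$-torsion being faithful since $\mathrm{char}\,F \neq 2, 3$). A generic member contributes $-1$. The member with $j = 0$ has $G \cong \Z/6\Z$ and contributes an element of order $6$, hence one of order $3$; the member with $j = 1728$ has $G \cong \Z/4\Z$ and contributes an element of order $4$. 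Since $SL_2(F_3) \cong 2{\cdot}A_4$ has no subgroup of order $12$, any element of order $3$ together with any element of order $4$ generate it. Hence $\bar\phi$ is surjective.

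For the splitting, observe that $S$ acts freely on $I$: a nonzero translation of $E(I)$ has no fixed point in $I$. Since $|S| = 9 = |I|$, the action of $S$ on $I$ is free and transitive, so for any $P \in I$ the stabilizer $H_P$ satisfies $H_P \cap S = \{1\}$ and $|H| = |S| \cdot |H_P|$. Therefore $H = S \rtimes H_P$ with $H_P \cong H/S \cong SL_2(F_3)$, as claimed.
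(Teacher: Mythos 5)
Your proof is correct, and its skeleton --- $S$ is normal in $H=\mathrm{Stab}(I)$, the induced action on $E(I)$ has kernel exactly $S$ (using that four points of $I$ are in general position), the image lands in $SL_2(F_3)$ because the commutator pairing on lifts is conjugation-invariant, and the extension splits via a point stabilizer since $S$ acts simply transitively on $I$ --- is essentially what the paper assembles from Lemma~\ref{lem:10} and the discussion around Theorems~\ref{theom:9} and~\ref{theom:11} (the theorem itself is only quoted from Dolgachev, with the argument supplied piecemeal afterwards). Where you genuinely diverge is the surjectivity of $H/S \to SL_2(F_3)$, which is the real content. The paper realizes each element of $SL_2(F_3)$ directly: given $ad-bc=1$ it sends the Heisenberg generators $\tilde x,\tilde y$ of Lemma~\ref{lem:7} to $\tilde x^a\tilde y^b$, $\tilde x^c\tilde y^d$, checks that the relations $\tilde x_1^3=\tilde y_1^3=1$ and $\tilde x_1\tilde y_1=\rho\tilde y_1\tilde x_1$ are preserved, and invokes Noether--Skolem to produce an element of $\PGL_3(\bar F)$ normalizing $S$, hence fixing $I$ by Theorem~\ref{theom:9}. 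You instead harvest elements of order $3$ and $4$ from the stabilizers $S \rtimes \Aut(\bar C',[0])$ of the $j=0$ and $j=1728$ members of the Hesse pencil through $I$, and finish with the observation that $SL_2(F_3)$ has no subgroup of order $12$. Both are sound. The paper's route is constructive and uniform, needing only the Heisenberg presentation of $\tilde S$; yours trades that for classical inputs (the structure of $\mathrm{Stab}(\bar C')$, faithfulness of $\Aut(E)$ on $E[3]$, surjectivity of the $j$-map on the pencil $L_I$ so that the special members exist), but has the virtue of exhibiting geometrically where the extra symmetries come from. If you keep your version, do record explicitly that the $j=0$ and $j=1728$ members of the pencil exist over $\bar F$, since your generation argument depends on both.
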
 

To understand the above theorem set $\Aff(I)$ to be the group of bijections of $I$ 
which preserve colinearity, and we view $E(I) \subset \Aff(I)$ 
via the action of $E(I)$ on $I$, which 
preserves colinearity by Lemma~\ref{lem:4}. 

\begin{lemma} \label{lem:10}. There is an exact sequence 
$$0 \to E(I) \to \Aff(I) \to \
GL_2(F_3) \to 0$$
where $\Aff(I) \to GL_2(F_3)$ is induced by taking the action on 
$E(I)$. 
\end{lemma}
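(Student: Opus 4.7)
The plan is to construct a homomorphism $\Aff(I)\to\Aut(E(I))=\GL_2(F_3)$, identify its kernel with the subgroup of translations $E(I)\hookrightarrow\Aff(I)$, and then exhibit a splitting. Surjectivity will be the main step, and it rests on reinterpreting collinearity in $I$ purely in terms of the group $E(I)$.

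For the map, given $\phi\in\Aff(I)$, I define $\phi_*\colon E(I)\to E(I)$ by $\phi_*(\{(P,Q)\})=\{(\phi(P),\phi(Q))\}$. Since $\phi$ preserves collinearity on $I$, the relation $(P,Q)\sim(R,S)$ witnessed by a point $T$ transports to $(\phi(P),\phi(Q))\sim(\phi(R),\phi(S))$ via $\phi(T)$, so $\phi_*$ is well-defined; it is a group homomorphism straight from the definition of addition on $E(I)$. Because every nonzero element of $E(I)$ has order three (Lemma \ref{lem:4}(a)) and $|E(I)|=9$, we have $E(I)\cong(F_3)^2$ and so $\Aut(E(I))=\GL_2(F_3)$. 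That $E(I)$ embeds into $\Aff(I)$ via translations follows from Lemma \ref{lem:4}(d), and the embedding is injective since the action of $E(I)$ on $I$ is free.

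To identify the kernel, note that translations are killed by $\phi\mapsto\phi_*$ by Lemma \ref{lem:4}(b). Conversely, if $\phi_*=\mathrm{id}$, fix $P_0\in I$ and set $\gamma=\{(\phi(P_0),P_0)\}$. For any $P\in I$, applying $\phi_*$ to $\{(P,P_0)\}$ yields $\{(\phi(P),\phi(P_0))\}=\{(P,P_0)\}$, and combining this with $\{(\phi(P),P_0)\}=\{(\phi(P),\phi(P_0))\}+\{(\phi(P_0),P_0)\}$ gives $\phi(P)=P+\gamma$, so $\phi$ is translation by $\gamma$.

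For surjectivity, the crucial input is Lemma \ref{lem:4}(c): upon fixing a basepoint $P_0\in I$ and writing $\alpha_P:=\{(P,P_0)\}$, three points $P,Q,R\in I$ are collinear if and only if $\alpha_P+\alpha_Q+\alpha_R=0$ in $E(I)$. Thus the collinearity structure on $I$ is entirely captured by the group $E(I)$ via the bijection $P\leftrightarrow\alpha_P$. Given $A\in\GL_2(F_3)=\Aut(E(I))$, I define $\phi_A\colon I\to I$ by $\alpha_{\phi_A(P)}=A(\alpha_P)$. Since $A$ is a group automorphism, $\phi_A$ carries collinear triples to collinear triples, so $\phi_A\in\Aff(I)$; a direct computation yields $(\phi_A)_*=A$. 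This simultaneously proves surjectivity and produces a splitting, realizing $\Aff(I)=E(I)\rtimes\GL_2(F_3)$.
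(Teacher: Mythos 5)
Your proposal is correct and takes essentially the same route as the paper: both define the map via the induced action on $E(I)$, show the kernel consists exactly of translations by an element $\alpha=\{(\phi(P_0),P_0)\}$, and obtain surjectivity together with a splitting by fixing a basepoint and transporting an automorphism $A$ of $E(I)$ back to a collinearity-preserving bijection of $I$ using Lemma~\ref{lem:4}. The only differences are cosmetic (you spell out why $\Aut(E(I))=\GL_2(F_3)$ and why $(\phi_A)_*=A$).
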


\begin{proof} 
Let $\phi: I \to I$ be a bijection that preserves the colinearity 
relationship. Then $\phi$ induces an automorphism (i.e., element 
of $GL_2(F_3)$) of $E(I)$. This defines the morphism above. 
If $\phi$ is the identity on $E(I)$, then $(P,Q) \sim (\phi(P),\phi(Q))$ 
for all $P,Q$ so $(\phi(P),P) \sim (\phi(Q),Q)$ and there is an 
$\alpha \in E(I)$ containing all $(\phi(P),P)$. 
Thus $\phi(P) = P + \alpha$. Finally, if $A \in GL_2(F_3)$ and 
$P \in I$ we define $A_P: I \to I$ by letting $A_P(Q)$ be the 
element of $I$ such that 
$(A_P(Q),P) \in A(\{(Q,P)\})$. By the lemma above $A_P$ preserves 
colinearity and shows that there is a splitting $GL_2(F_3) \to 
\Aff(I)$ for every choice of $P$. In particular the map is onto.
As we saw in Lemma~\ref{lem:4}, the action of $E(I)$ on $I$ induces the 
trivial action on $E(I)$. 
\end{proof} 

Soon we will also need to consider $\SAff(I) \subset 
\Aff(I)$ which is the preimage of $SL_2(F_3)$. 

It will be useful to notice that the action of $\Aff(I)$ 
on $I$ induces a canonical element of $\gamma \in H^1(\Aff(I), E(I))$. 
Define $d: \Aff(I) \to E(I)$ by setting 
$d(g) = \{(g(P),P)\}$ for a choice of $P$. Then 
for $g,h \in \Aff(I)$, $d(gh) = \{(gh(P),P)\} = 
\{(gh(P),g(P))\} + \{(g(P),P)\} = g(\{(h(P),P)\} + \{(g(P),P)\} = 
g(d(h)) + d(g)$ and thus $d$ is a 1 cocycle. 
If we define $d'(g) = \{(g(Q),Q)\}$ then 
$d(g) - d'(g) = g(\{(P,Q)\}) - \{(P,Q)\}$
and $d$ and $d'$ are cohomologous and any choice 
of $P$ yields the same cohomology class we call 
$\gamma$. If we restrict 
$d$ to $E(I)$ then $d$ is the identity, viewed as an 
element of $\Hom(E(I),E(I)) = H^1(E(I),E(I))$. 
Of course if $\gamma_S$ is the restriction of $\gamma$ 
to $\SAff(I)$, then $\gamma_S$ also restricts to the 
identity on $E(I)$.

\begin{proposition} \label{gamma} $\gamma \in H^1(\Aff(I), E(I))$ is the 
unique element which restricts to the identity of 
$H^1(E(I),E(I)) = \Hom(E(I),E(I))$. 
The same holds for $\gamma_S \in H^1(\SAff(I),E(I))$. 
\end{proposition}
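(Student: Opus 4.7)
The approach is to apply the inflation--restriction exact sequence to the short exact sequence $0 \to E(I) \to \Aff(I) \to GL_2(F_3) \to 0$ from Lemma~\ref{lem:10}. Since $E(I)$ is abelian it acts trivially on itself, so $E(I)^{E(I)} = E(I)$ and the sequence yields
$$0 \to H^1(GL_2(F_3), E(I)) \to H^1(\Aff(I), E(I)) \to H^1(E(I), E(I))^{GL_2(F_3)}.$$
The identity endomorphism commutes with every element of $GL_2(F_3) = \Aut(E(I))$, so it is in the fixed subgroup, and $\gamma$ was constructed just before the proposition statement, so existence is already in hand. For uniqueness, any two classes in $H^1(\Aff(I), E(I))$ restricting to the identity differ by an element in the image of $H^1(GL_2(F_3), E(I))$, so it suffices to show this group vanishes.

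The key observation is that $GL_2(F_3)$ contains the central element $c = -I$, which acts on $E(I) \cong (F_3)^2$ by $-1$. For any $1$-cocycle $f: GL_2(F_3) \to E(I)$ and any $g$, the identity $f(gc) = f(cg)$ combined with the cocycle rule $f(xy) = f(x) + x \cdot f(y)$ gives
$$(c-1) \cdot f(g) = (g-1) \cdot f(c).$$
Since $c$ acts as $-1$, the left side is $-2 f(g)$; because $E(I)$ is $3$-torsion, $-2 \equiv 1 \pmod 3$ is invertible, so $f(g) = (g-1) \cdot f(c)$. This exhibits $f$ as the coboundary of $f(c)$, and hence $H^1(GL_2(F_3), E(I)) = 0$. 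Combined with inflation--restriction, this proves uniqueness of $\gamma$.

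The argument for $\gamma_S$ is identical once one notes that $-I \in SL_2(F_3)$ and remains central there; applying the inflation--restriction sequence to $0 \to E(I) \to \SAff(I) \to SL_2(F_3) \to 0$ and the same cocycle manipulation gives $H^1(SL_2(F_3), E(I)) = 0$, so the restriction $H^1(\SAff(I), E(I)) \to H^1(E(I), E(I))$ is injective and $\gamma_S$ is uniquely determined by its restriction.

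There is no serious obstacle here; the proof rests on the fortunate interaction between the order-$2$ central element in $GL_2(F_3)$ and the $3$-torsion of the coefficient module, which is the standard trick for killing $H^1$ in this setting. The only thing worth being careful about is identifying the action of $GL_2(F_3)$ on $E(I)$ appearing in inflation--restriction with the tautological action, which is immediate from the construction of the splitting in Lemma~\ref{lem:10}.
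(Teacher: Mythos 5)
Your proof is correct, and its overall skeleton matches the paper's: both apply inflation--restriction (low-degree Hochschild--Serre) to $0 \to E(I) \to \Aff(I) \to GL_2(F_3) \to 0$ and reduce everything to the vanishing of $H^1(GL_2(F_3), E(I))$ and $H^1(SL_2(F_3),E(I))$. Where you genuinely diverge is in how that vanishing is proved. The paper first passes from $GL_2(F_3)$ to $SL_2(F_3)$ using the index-$2$ inclusion, then decomposes $SL_2(F_3) \cong Q \rtimes C_3$ with $Q$ the quaternion group, and kills $H^1$ by a second application of Hochschild--Serre together with $H^1(Q, F_3 \oplus F_3)=0$ and $(F_3\oplus F_3)^Q = 0$. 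You instead exploit the central element $-I$, which lies in both $SL_2(F_3)$ and $GL_2(F_3)$ and acts on $E(I)$ by $-1$: the cocycle identity $(c-1)\cdot f(g) = (g-1)\cdot f(c)$ together with the invertibility of $-2 \equiv 1 \pmod 3$ exhibits every cocycle as the coboundary of $f(c)$. This is the standard ``central element acting by a scalar $\lambda$ with $\lambda - 1$ invertible'' argument, and your explicit verification of it is correct. Your route is shorter, handles $GL_2(F_3)$ and $SL_2(F_3)$ simultaneously without the index-$2$ reduction, requires no knowledge of the internal structure of $SL_2(F_3)$, and in fact yields vanishing of $H^n$ in all positive degrees; the paper's route is a more pedestrian structural decomposition but uses only the most elementary facts (order of $Q$ prime to $3$, absence of $Q$-fixed points). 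Both are complete proofs.
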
 

\begin{proof} Let $G = GL_2(F_3)$ or $SL_2(F_3)$ 
and $H = \Aff(I)$ or $\SAff(I)$ so $H \to G$ has kernel $E(I)$. 
The Hochschild-Serre spectral sequence 
shows that there is an exact sequence 
$H^1(G,E(I)) \to H^1(H,E(I)) \to 
H^1(E(I),E(I))$ and so it suffices to show that 
$H^1(G,E(I)) = H^1(G,F_3 \oplus F_3) = 0$. 
Since $SL_2(F_3)$ has index 2 in $GL_2(F_3)$, it suffices 
to show $H^1(SL_2(F_3),E(I)) = 0$. 
As a finite group, one can calculate that $SL_2(F_3) = 
Q \rtimes F_3^+$ where $Q$ is the quaternion group of 
order 8. Since $H^1(Q,F_3 \oplus F_3) = 0$ and 
$(F_3 \oplus F_3)^Q = 0$, we are done by another use of 
Hochschild-Serre. 
\end{proof}

Now let $H \subset \PGL_3(\bar F)$ be the stabilizer of $I$. 
We have a homomorphism $\eta: H \to \Aff(I)$. 
Furthermore, any $\phi \in \PGL_3(\bar F)$ 
which is the identity on $I$ must be trivial. 
Thus $\eta$ is an injection. $S \subset H$ acts trivially 
on $I$ and hence $\eta(S) = E(I)$. 
We saw above that $E(I)$ has a pairing which must be 
preserved by $H$ and so $\eta(H) \subset \SAff(I)$. 
The above theorem amounts to saying that 
$\eta(H) = \SAff(I)$. 

If $S^+$ is the stabilizer of $\bar C$, then it must 
stabilize $I$ and hence $S^+ \subset H$ while 
$S \subset S^+$ is precisely the kernel of the action 
of $S^+$ on $E(I)$. 

Let ${\cal I}$ be the set of nine point subsets of 
$\P^2$ which are the inflections points of smooth cubic 
$\bar C$. Let ${\cal S}$ be the set of nine element subgroups 
of $PGL_3(\bar F)$ where any $S \in {\cal S}$ is the image 
of $\tilde S \subset GL_3(\bar F)$ and $\tilde S$ is 
generated by $\tilde x$, $\tilde y$ with $\tilde x^3 = 1$, 
$\tilde y^3 = 1$, and $\tilde x\tilde y = \rho\tilde y\tilde x$. 

\begin{theorem} \label{theom:9}
 Th above discussion yields a one to one correspondence 
between ${\cal I}$ and $\cal S$. 
\end{theorem}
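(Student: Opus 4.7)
Plan: The map $\phi\colon \mathcal{I} \to \mathcal{S}$ sending $I$ to its translation subgroup $S = E(I) \subset \PGL_3(\bar F)$ (constructed in Lemmas~\ref{lem:7} and~\ref{lem:8}) is $\PGL_3(\bar F)$-equivariant: for $g \in \PGL_3(\bar F)$, the inflection set of $g\cdot \bar C$ is $gI$ and its translation subgroup is $gSg^{-1}$, so $\phi(gI) = g\phi(I)g^{-1}$. My plan is to show $\phi$ is a bijection by identifying both $\mathcal{I}$ and $\mathcal{S}$ as the same homogeneous space $\PGL_3(\bar F)/H$.

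First I would verify that both sets are transitive $\PGL_3(\bar F)$-sets. For $\mathcal{S}$ this is established in the paragraph preceding the theorem: all Heisenberg lifts $\tilde S$ are conjugate in $\GL_3(\bar F)$. For $\mathcal{I}$, one invokes the classical Hesse normal form: every smooth plane cubic is $\PGL_3(\bar F)$-equivalent to a member of the Hesse pencil $x^3 + y^3 + z^3 - 3txyz = 0$, whose members all share the same nine base points; hence every $I$ is $\PGL_3(\bar F)$-equivalent to the standard Hesse inflection set.

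Next I would compare stabilizers. Theorem~\ref{classical} gives $\mathrm{Stab}_{\PGL_3(\bar F)}(I) = S \rtimes SL_2(F_3)$ of order $216$, and this is contained in the normalizer $N_{\PGL_3(\bar F)}(S)$ since $E(I) = S$ is intrinsic to $I$ by Lemma~\ref{lem:8}. To bound $|N(S)|$ from above I would consider the injection $N(S)/C(S) \hookrightarrow \Aut(S) \cong GL_2(F_3)$, where $C(S)$ denotes the centralizer. Schur's lemma applied to the irreducible Heisenberg representation of $\tilde S$ on $\bar V$ gives $C_{\GL_3(\bar F)}(\tilde S) = \bar F^\times$, from which a short calculation (lift $g \in C(S)$ to $\tilde g$ satisfying $\tilde g\tilde x\tilde g^{-1} = \lambda\tilde x$, $\tilde g\tilde y\tilde g^{-1} = \mu\tilde y$ with $\lambda^3=\mu^3=1$, and match with elements of $\tilde S$) yields $C_{\PGL_3(\bar F)}(S) = S$. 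The image in $\Aut(S)$ preserves the $\mu_3$-valued commutator pairing on $S$ inherited from $\tilde S$, since commutators are central and so invariant under any conjugation; hence it lies in $SL_2(F_3)$, and therefore $|N(S)| \le 9 \cdot 24 = 216$. Equality forces $\mathrm{Stab}(I) = N_{\PGL_3(\bar F)}(\phi(I))$, and combined with equivariance and the two transitivity statements this proves $\phi$ is a bijection.

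The main obstacle will be the transitivity of $\PGL_3(\bar F)$ on $\mathcal{I}$, which rests on the classical Hesse normal form. A more self-contained but more computational alternative would construct the inverse $\psi(S)$ directly as the base locus of the pencil of $\tilde S$-invariant cubic forms in $\bar W$, which requires a representation-theoretic check that $\bar W^{\tilde S}$ is $2$-dimensional (using $Z$-triviality on $\bar W$ and $\Aut(S)$-transitivity on nontrivial characters of $S$) and an identification of its base locus with the inflection set of a generic smooth member of the pencil.
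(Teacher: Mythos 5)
Your argument is correct, but it proves the bijection by a different mechanism than the paper does. The paper's proof exhibits the inverse map explicitly: given $S$, the nine points $V_i\cap W_j$ of Lemma~\ref{lem:7} recover $I$ (after checking the construction is independent of the chosen basis of $S$), and conjugacy of all elements of $\mathcal S$ is used only to guarantee that this nine-point set really is the inflection locus of some smooth cubic; the converse direction is the one sentence that $S$ is the kernel of the action of $H=\mathrm{Stab}(I)$ on $E(I)$. You instead construct only the forward map $I\mapsto S$ and deduce bijectivity abstractly from equivariance, transitivity of $\PGL_3(\bar F)$ on both sides, and the coincidence of stabilizers $\mathrm{Stab}(I)=N(S)$, the latter obtained by sandwiching: $\mathrm{Stab}(I)\subseteq N(S)$ with $|\mathrm{Stab}(I)|=216$ from Theorem~\ref{classical}, versus $|N(S)|\le |C(S)|\cdot|Sp_2(F_3)|=9\cdot 24=216$ via Schur's lemma and invariance of the commutator pairing. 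Each step checks out (in particular $C_{\PGL_3(\bar F)}(S)=S$ and the image of $N(S)$ in $\Aut(S)$ landing in $SL_2(F_3)$ are both right), and as a byproduct you get Theorem~\ref{theom:11} ($H=N(S)$) for free, which the paper derives only afterwards from Theorem~\ref{theom:9}. The cost is that you need two extra inputs the paper's proof avoids: transitivity of $\PGL_3(\bar F)$ on $\mathcal I$ (which you get from the Hesse normal form --- legitimate, and consistent with the paper's later discussion of the Hessian pencil, but an additional classical fact) and the full strength of the order count in Theorem~\ref{classical}, whereas the paper leans only on the conjugacy of the groups in $\mathcal S$ and the explicit eigenspace description of $I$. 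Both are valid; yours is more structural, the paper's more constructive.
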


\begin{proof} 
We saw above that $S$ determines $I$, once we observe that the 
description there is independent of the basis of $S$ chosen. 
Since all the elements of ${\cal S}$ are conjugate, the 
subset $I$ must be the inflection points of some smooth 
cubic $\bar C$. 
For the converse, $I$ determines its stabilizer $H$, and 
$S$ is the kernel of the action of $H$ on $E(I)$.
\end{proof}

Note that if $\bar G$ is the Galois group of $\bar F/F$, then the above correspondence clearly commutes with the action of $\bar G$. 

If follows from Theorem~\ref{theom:9} that if $\phi \in PGL_3(\bar F)$,  then 
$\phi{S}\phi^{-1} = S$ if and only if $\phi(I) = I$. 
We have

\begin{theorem} \label{theom:11}
$H$ is the normalizer of $S$ in $\PGL_3(\bar F)$.  
\end{theorem}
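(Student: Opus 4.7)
The plan is to observe that the proposition is a direct translation of the observation made just before the statement, namely that $\phi S\phi^{-1}=S$ if and only if $\phi(I)=I$. By definition $H$ is the stabilizer of $I$, so this equivalence says precisely $H = N_{\PGL_3(\bar F)}(S)$. The two inclusions can be unpacked as follows.

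For $H \subset N_{\PGL_3(\bar F)}(S)$, take $\phi \in H$ and $s \in S$. Under the identification $S \cong E(I)$ from Lemma~\ref{lem:8}, $s$ acts on $I$ as translation by some $\alpha \in E(I)$. Since $\phi$ preserves $I$, it induces an automorphism $\phi_*$ of $E(I)$ (compatibly with the colinearity structure), and a direct computation gives $\phi s\phi^{-1}(P) = P + \phi_*(\alpha)$ for $P \in I$. Let $s' \in S$ be the element corresponding to $\phi_*(\alpha)$. Then $s'$ and $\phi s\phi^{-1}$ agree on $I$, so $(s')^{-1}\phi s\phi^{-1}$ is the identity on $I$. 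By the earlier observation that any element of $\PGL_3(\bar F)$ acting trivially on $I$ is trivial, $\phi s\phi^{-1} = s' \in S$. Hence $\phi S\phi^{-1}\subset S$, and applying the same argument to $\phi^{-1}$ gives equality.

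For the reverse inclusion $N_{\PGL_3(\bar F)}(S)\subset H$, suppose $\phi S\phi^{-1} = S$. The set $\phi(I)$ is again the set of inflection points of a smooth cubic, namely $\phi(\bar C)$, so $\phi(I)\in {\cal I}$. Moreover, by the description of $S$ as the translation subgroup acting on $I$, the subgroup in ${\cal S}$ corresponding (via Theorem~\ref{theom:9}) to $\phi(I)$ is exactly $\phi S\phi^{-1}$. The hypothesis says this equals $S$, and since the correspondence of Theorem~\ref{theom:9} is a bijection, $\phi(I)=I$, i.e.\ $\phi \in H$.

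The only substantive step is the equivariance of the correspondence ${\cal I}\leftrightarrow {\cal S}$ under $\PGL_3(\bar F)$; but this is built into the construction, since the translation subgroup of the inflection set $\phi(I)$ of the transported cubic $\phi(\bar C)$ is manifestly $\phi S\phi^{-1}$. There is no real obstacle here — Theorem~\ref{theom:9} has already done the work, and Theorem~\ref{theom:11} is essentially a restatement.
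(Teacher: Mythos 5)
Your proposal is correct and follows the same route as the paper, which derives Theorem~\ref{theom:11} directly from the $\PGL_3(\bar F)$-equivariance and bijectivity of the correspondence in Theorem~\ref{theom:9} (stated in the sentence immediately preceding the theorem). You simply make explicit the two inclusions and the equivariance computation that the paper leaves implicit; no new idea is involved and no gap is present.
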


We can add to our understanding of Theorem~\ref {classical} as follows. 
We saw above that if $\phi$ normalizes $S$ then 
$\phi(I) = I$ and so $\phi$ maps to $\SAff(I)$.  
Conversely suppose if $a,b,c,d \in F_3$ are such that $ad - bc = 1$. 
Write 
$x_1 = \tilde x^a\tilde y^b$, $\tilde y_1 = \tilde x^c\tilde y^d$ 
and note that $\tilde x_1^3 = \tilde y_1^3 = 1$ and $\tilde x_1\tilde y_1 = 
\rho\tilde y_1\tilde x_1$. Thus setting $\tilde x \to \tilde x_1$ and 
$\tilde y \to \tilde y_1$ defines an automorphism of 
$M_3(\bar F)$ and $\phi$ exists by Noether-Skolem. Thus 
$H$ maps onto $\SAff(I)$.

For a given $I$, we can look at all the curves $\bar C$ 
which contain $I$. It is classically known that this forms 
a line $L_I$ in $\P^9$ and all $j$ invariants appear somewhere 
on a point of this line. It is instructive to outline how we know this. 
First of all, let $\tilde N \subset GL_3(\bar F)$ be in the preimage of 
$H = N(S)$. Let $S = <x> \oplus <y>$ and let $x',y' \in \tilde N$ 
be preimages with $x'^3 = 1 = y'^3$. If $\tilde S \subset \tilde N$ 
is the group generated by $x',y'$ then $\tilde S$ is the Heisenberg 
group and $\tilde S\bar F^*$ is the full preimage of $S$. 
Thus $\tilde N/\tilde S\bar F^* = N(S)/S = \SL_2(F_3)$. 
Let $M \subset \tilde H$ be the abelian group generated by 
$x'$ and $\rho$.  

Of course, $\bar F^3$ is a module over $\tilde N$. 
Obviously one could 
describe this module but we do not need this. It suffices to note that as a 
$\tilde S$ module it is $\Ind_M^{\tilde S} L$ where 
$M$ acts on $L = \bar F^1v$ by setting 
$x'(v) = \rho{v} = \rho(v)$. That is, $V = \bar F^3$ has a basis 
$\{v_0,v_1,v_2\}$ 
where $\bar Fv_i$ is an $x'$ eigenspace with eigenvalue $\rho^i$ and 
$y$ permutes the $v_i$. Let $V^*$ be the dual of $V$ with dual basis 
$v_0^*,v_1^*,v_2^*$. 
Of course $S^3(V^*)$ is spanned by the degree-three monomials in the $v_i^*$. 
Moreover, the inverse image of $I$ in $V$ are the nine lines $L_{ij}$ 
where $L_{ij}$ is the simultaneous zero of $v_i^*$ and 
$w_j^* = v_0^* + \rho^jv_1^* + \rho^{2j}v_2^*$ which is the eigenvector for $y$. 

Of course $\rho$ acts trivially on $S^3(V^*)$, and the $\tilde S$ fixed subspace is 
spanned by $v_0^*v_1^*v_2^*$ and $(v_0^*)^3 + (v_1^*)^3 + (v_2^*)^3$. 
Thus the projective space of $C$'s fixed by $\tilde S$ is a projective line. This is the classical Hessian pencil and 
associated Hessian normal form. 

Having made this computation, we are ready to prove: 

\begin{theorem} \label{theom:12}
 Suppose $S \subset PGL_3(\bar F)$ is associated with 
$I \subset \P^2$. Then a cubic curve $\bar C$ contains $I$ if and only 
if $S$ fixes $\bar C$ or equivalently that $\tilde S$ fixes $\bar C$, 
where $\tilde S$ is the Heisenberg group 
in the preimage of $S$ in $GL_3(\bar F)$. 
\end{theorem}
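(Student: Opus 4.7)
I interpret ``$\tilde S$ fixes $\bar C$'' (and ``$S$ fixes $\bar C$'') as meaning that $\tilde S$ fixes the defining form of $\bar C$ pointwise in $S^3(V^*)$, in line with the usage ``the $\tilde S$-fixed subspace'' of $S^3(V^*)$ in the paragraph preceding the theorem. The equivalence with the $S$-version is immediate because the scalar $\rho \in \tilde S$ acts on $V^*$ by $\rho^{-1}$ and hence trivially on $S^3(V^*)$, so the $\tilde S$-action on cubic forms factors through $S$. Thus the theorem reduces to the statement that a cubic $\bar C$ contains $I$ if and only if its defining form lies in the two-dimensional Hessian pencil $S^3(V^*)^{\tilde S}$, which the preceding computation identified as the span of $F_1 = v_0^* v_1^* v_2^*$ and $F_2 = (v_0^*)^3 + (v_1^*)^3 + (v_2^*)^3$.

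For the forward direction I would simply check that $F_1$ and $F_2$ vanish on each line $L_{ij}$. The product $F_1$ vanishes because $v_i^*$ is a factor and $v_i^* = 0$ on $L_{ij}$. For $F_2$, using $w_j^* = 0$ on $L_{ij}$ to eliminate one of the remaining two variables gives $v_k^* = -\rho^{mj} v_\ell^*$ for the two other indices and the appropriate $m$, so $(v_k^*)^3 + (v_\ell^*)^3 = (1 - \rho^{3mj})(v_\ell^*)^3 = 0$ by $\rho^3 = 1$. Hence every form in the Hessian pencil defines a cubic containing $I$.

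For the converse I would appeal to Cayley--Bacharach. By Bezout the intersection $\{F_1 = F_2 = 0\}$ has at most nine points; the forward direction puts $I$ inside that intersection and $|I| = 9$, so the base locus of the Hessian pencil is exactly $I$ and the intersection is transverse. The classical Cayley--Bacharach theorem for two cubics meeting transversely in nine points asserts that every cubic through those nine points lies in the pencil they span, so the space of cubics through $I$ equals $S^3(V^*)^{\tilde S}$, finishing the proof.

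The only real obstacle is the converse, and it reduces cleanly to Cayley--Bacharach once the base locus of the Hessian pencil has been identified with $I$. As a self-contained alternative avoiding that citation, one can run an isotypic argument: the kernel $K \subset S^3(V^*)$ of evaluation on $I$ is $H$-stable, hence $S$-stable; since $S$ acts regularly on $I$ (Lemma~\ref{lem:7}) one has $\bar F^I \cong \bar F[S]$ as $S$-modules, so the eight nontrivial isotypic components $S^3(V^*)_\chi$ are each one-dimensional while $S^3(V^*)_1$ has dimension two. Because $H/S \cong SL_2(F_3)$ preserves $K$ and permutes the eight nontrivial characters of $S$ transitively, $K$ must be either $S^3(V^*)_1$ or all of $S^3(V^*)$; the latter is absurd and the former gives the desired conclusion.
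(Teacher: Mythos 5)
Your argument is correct, and your main route for the converse is genuinely different from the paper's. The paper proves the converse by pure character theory: the space of cubics vanishing on $I$ is a $\tilde S$-stable subspace of $S^3(V^*)$, and the paper checks, one nontrivial character of $S$ at a time, that the corresponding eigenvectors (e.g.\ $(v_0^*)^3+\rho(v_1^*)^3+\rho^2(v_2^*)^3$, and the eigenvectors inside the span of the monomials $(v_1^*)^2v_2^*,(v_2^*)^2v_0^*,(v_0^*)^2v_1^*$) do not lie in the ideal $(v_0^*,w_0^*)$, so that the submodule can only be the trivial isotypic component. Your Cayley--Bacharach argument replaces all of that with the classical fact that the nine transverse base points of a pencil of cubics impose exactly eight conditions; this is shorter and more geometric, but rests on an external citation, and you should note explicitly that $v_0^*v_1^*v_2^*$ and the Fermat cubic share no common component before invoking Bezout. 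Your ``self-contained alternative'' is essentially the paper's own proof, improved by using transitivity of $SL_2(F_3)$ on the eight nontrivial characters so that only the dichotomy $K=S^3(V^*)_1$ or $K=S^3(V^*)$ survives --- a nice economy over checking eigenvectors one by one. The one step to repair there is the multiplicity count: the isomorphism $\bar F^I\cong\bar F[S]$ describes the target of the evaluation map and does not by itself yield the isotypic decomposition of the source. That each nontrivial character of $S$ occurs exactly once in $S^3(V^*)$ (and the trivial one twice) follows instead from the monomial computation already present in the paragraph preceding the theorem: the $x'$-eigenvalues split the ten cubic monomials as $4+3+3$ and $y'$ permutes each orbit cyclically.
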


\begin{proof} 
If $\tilde S$ fixes $\bar C$, then $\bar C$ is in the span of 
$v_0^*v_1^*v_2^*$ and $(v_0^*)^3 + (v_1^*)^3 + (v_2^*)^3$. 
We need to show that any such $\bar C$ is in the ideal $(v_i^*,w_j^*)$ 
for any $i$ and $j$. This implies that $I \subset \bar C$. 
We compute that $w_0^*w_1^*w_2^* = 
(v_0^*)^3 + (v_1^*)^3 + (v_2^*)^3 + (-3)(v_0^*v_1^*v_2^*)$ 
and this direction is obvious. 

Conversely, suppose $I \subset \bar C$. 
We need to show that $\tilde S$ fixes $\bar C$. 
Note that any zero set of an $f$ contains $I$ if and only if 
$f \in \cap_{i,j} (v_i^*,w_j^*) = J$. Clearly $J$ and hence $J \cap S^3(V^*)$ 
is preserved by $\tilde H$. It thus suffices to show that no eigenvector 
for $\tilde H$, with nontrivial eigenvalue, is in $J$. We can check this 
one by one. $(v_0^*)^3 + {\rho}(v_1^*)^3 + {\rho^2}(v_2^*)^3$ 
is clearly not in $(v_0^*,w_0^*)$ and the same holds for 
$(v_0^*)^3 + {\rho^2}(v_1^*)^3 + {\rho}(v_2^*)^3$. 
The $\tilde H$ span of $(v_1^*)^3v_2^*$ has basis this vector and 
$(v_2^*)^2v_0$, $(v_0^*)^2v_1^*$. Any eigenvector has a nonzero 
$(v_1^*)^3v_2^*$ term and since $(v_1^*)^3v_2^* \notin (v_0^*,w_0^*)$ 
it follows that this eigenvector is not in $I$. 
The same argument works for the $\tilde S$ span of $(v_1^*)^2v_0^*$.
\end{proof}

We defined $L_I$ to the line of $\bar C$'s which 
contain $I$ or equivalently which are fixed by $S$. 
Ir follows that the normalizer, $N(S)$, of $S$ in 
$PGL_3(\bar F)$ acts on $L(I)$. Since all the $S$'s and 
$I$'s are conjugate, any $PGL_3(\bar F)$ orbit intersects 
$L_I$. 
If $\bar C, \bar C' \in L_I \cap O$  and 
$\bar C' = g(\bar C)$ then both these cubics have stabilizer 
containing $S$. Thus $g \in N(S)$. The quotient 
of $L_I$ by $N(S)$ is a $\P^1$, and the associated map 
$L_I \to \P^1$ can be viewed as taking any curve $\bar C$ 
to its $j$ invariant. A generic element of $L_I$, specifically 
one with $j \not= 0, 1728$, as stabilizer $S^+$ where 
$S^+/S$ has order 2. Thus $L_I \to \P^1$ is generically 
a Galois cover with group $N(S)/S^+ \cong SL_2(F_3)/{\pm 1}$ 
which has order 12. 
However we need to be more careful. Assume first that $\bar C$ 
has $j \not= 0,1728$. 
While there are 12 
cubic curves $\bar C \supset I$, there are 24 associated 
maps $\iota: I \to \bar C$ and $b: \bar C \to \P^2(\bar F)$, 
because if $s \in S^+$, we can replace $\iota$ by $s\iota$ and $b$ by 
$bs^{-1}$, preserving $b\iota$ which is the fixed embedding of $I$. 
Of course when $j = 0$ there are $4$ curves $\bar C$ and 
when $j = 1728$ there are $6$ such curves $\bar C$, but there 
are always $24$ embeddings and $24$ maps $\iota: I \to \bar C$. 
Thus with fixed embedding $I \to \P^2$, choosing an embedding 
$\bar C \to \P^2$ is also specifying an embedding $\iota: I \to \bar C$.  

We defined $\psi: S \to E(I)$ above. Fix an embedding 
$I \subset \P^2_{\bar F}$. Assume $\bar C \supset I$. 
We want to extend $\psi$ to an identification of $S$ and 
$E(I)$ with the three torsion points of $E(\bar C) = E$, 
the Jacobian of the genus one curve $\bar C$. 
Given $\iota: I \to \bar C$, 
there is an induced $E(\iota): E(I) \to E$ given by 
$(P,Q) \to P - Q$. If $S^+ \subset \PGL_3(\bar F)$ 
is the stabilizer of $\bar C$ we remarked above 
that $S^+/S$ was the automorphism group of 
$E(\bar C)$ because the translation action of $S$ on $\bar C$ becomes trivial on $E(\bar C)$. Given a fixed $\iota$, 
we define $\phi_C: S \to E(\bar C)$ as $E(\iota) \circ \psi$. That is, $\phi_C(s) = s(P) - P$ for any choice 
of $P$ in $I$. Let $\eta: S^+/S \cong \Aut(E(\bar C))$ be the isomorphism 
mentioned above. 

\begin{lemma} \label{lem:13}
 Assume $\bar C \subset \P^2_{\bar F}$. Then there is an induced 
injective 
group homomorphism 
$\phi_C = E(\iota) \circ \psi: S \to E(I) \to E(\bar C) = E$. 
If $t \in S^+$, then $\phi_C(tst^{-1}) = \eta(t)(\phi_C(s))$. 
All choices of $\phi_C$ have the form $\phi_C'(s) = \phi_C(tst^{-1})$. 
In particular, if $j \not= 0, 1728$, only 
$\phi_C$ and $-\phi_C$ arise. 
\end{lemma}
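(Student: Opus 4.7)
The plan is to break the lemma into four claims and handle each using two basic principles: $\psi: S \to E(I)$ is already a group isomorphism by Lemma~\ref{lem:8}, and the $S^+$-action on the embedded $\bar C$ descends along $S^+ \to S^+/S \xrightarrow{\eta} \Aut(E)$ to the natural action on $\Pic^0(\bar C) = E$, with the translation subgroup $S$ acting trivially on $\Pic^0$.

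To see that $\phi_C$ is an injective group homomorphism, I would first verify that $E(\iota): E(I) \to E$, sending $\{(P,Q)\} \mapsto [\iota(P)] - [\iota(Q)]$, is well-defined: three colinear points on $\bar C$ form a principal divisor class (being the intersection of $\bar C$ with a line), so if $(P,Q) \sim (R,S)$ with witness $T$, one gets $[\iota(P)] + [\iota(S)] + [\iota(T)] = [\iota(R)] + [\iota(Q)] + [\iota(T)]$ in $E$ and the equality $[\iota(P)] - [\iota(Q)] = [\iota(R)] - [\iota(S)]$ drops out. Additivity is immediate from the definition of the sum in $E(I)$. Injectivity follows because both $E(I)$ and $E[3]$ have order $9$, and distinct points of $\bar C$ remain distinct in $E$ on a curve of positive genus; hence $\phi_C = E(\iota) \circ \psi$ is injective.

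For the equivariance $\phi_C(tst^{-1}) = \eta(t)(\phi_C(s))$, the plan is to fix $P \in I$ and set $Q = t^{-1}(P) \in I$, which is valid since $t \in S^+$ preserves $I$. Expanding $\phi_C(tst^{-1}) = [\iota((tst^{-1})P)] - [\iota(P)] = [\iota(t(sQ))] - [\iota(t(Q))]$ and applying the $S^+$-action on $\Pic^0$ recovers $\eta(t)([\iota(sQ)] - [\iota(Q)]) = \eta(t)(\phi_C(s))$.

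Finally, a distinct choice $\iota'$ differs from $\iota$ by composition with an automorphism of $\bar C$ coming from some $t \in S^+$, and a parallel calculation yields $\phi_{C,\iota'}(s) = \eta(t)(\phi_C(s))$, which by the equivariance just proved equals $\phi_C(tst^{-1})$. When $j \neq 0, 1728$, one has $S^+/S = \{\pm 1\}$, and conjugation by a lift of the nontrivial class acts by inversion on the abelian group $S$ (since $-1 \in \Aut(E)$ inverts the three-torsion), so only $\phi_C$ and $-\phi_C$ can arise. I expect the subtlest part of the argument to be in the parameterization step: one must track carefully that variation of $\iota$ by a translation in $S$ produces no change in $\phi_C$ (consistent with $S$ acting trivially on $\Pic^0$), so that exactly $S^+/S$, rather than all of $S^+$, controls the inequivalent $\phi_C$'s.
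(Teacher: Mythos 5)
Your argument is correct, and it is worth noting that it proves strictly more than the paper's own proof does. The paper's proof is three lines long and addresses only the first assertion: it checks injectivity directly (if $\phi_C(s)$ is trivial then $s(P)=P$, forcing $s=1$) and additivity by the telescoping computation $\phi_C(st) = (st(P)-t(P)) + (t(P)-P)$, which silently uses the independence of $s(P)-P$ from the choice of $P$; the remaining three assertions are treated as consequences of the discussion preceding the lemma. You instead route additivity through Lemma~\ref{lem:8} (that $\psi$ is an isomorphism) together with the well-definedness and additivity of $E(\iota)$, and you get injectivity from the bijectivity of $P \mapsto \{(P,Q)\}$ combined with the injectivity of $\bar C \to \Pic^0(\bar C)$ --- a mild variant of the paper's argument with the same substance. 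Your explicit verification of the equivariance $\phi_C(tst^{-1}) = \eta(t)(\phi_C(s))$ via the substitution $Q = t^{-1}(P)$, your observation that replacing $\iota$ by $s_0\iota$ for $s_0 \in S$ leaves $\phi_C$ unchanged because translations act trivially on $\Pic^0(\bar C)$, and the resulting parameterization of the choices by $S^+/S \cong \Aut(E(\bar C))$ are exactly the facts the paper relies on but does not write out; making them explicit is the right thing to do, since the $j \neq 0,1728$ conclusion genuinely depends on them. The one place to be slightly more careful is the claim that any two admissible $\iota$ differ by an element of $S^+$: this requires knowing that the relevant automorphisms of $\bar C$ are those extending to $\P^2$, which is the paper's earlier identification of $S^+$ with the stabilizer of the embedded curve, and should be cited rather than asserted.
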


\begin{proof} All we have to show is that $\phi_C$ 
is an injective homomorphism. If $\phi{\bar C}(s)$ is the 
identity then $s(P) = P$ for one and hence all $P$. 
We compute that $\phi_{\bar C}(st) = st(P) - P = 
st(P) - t(P) + t(P) - P = \phi_{\bar C}(s) + \phi_{\bar C}(t)$.
\end{proof}

If $g \in N(S)$, then $g(\bar C) = \bar C'$ is isomorphic 
to $\bar C$ (via $g$) and thus $E(C')$ and $E(C)$ can be identified using $g$. 
We have the commuting diagram: 
$$
\begin{matrix}
I&\rightarrow&\bar C\\
\llap{$g$}\downarrow&&\downarrow\rlap{$g$}\\
I&\rightarrow&g(\bar C).
\end{matrix}
$$ 

Given such a $g$, let 
$\bar g \in N(S)/S = SL_2(F_3)$ be the image, which defines 
the action of $g$ on $E(I)$ and the action of $g$ on $S$ 
by conjugation. 
If $b: \bar C \to \P^2(\bar F)$ is an embedding, 
then $g$ defines an embedding 
$bg^{-1}: \bar C' = g(\bar C) \to \P^2(\bar F)$ and 
$g \circ \iota$ 
is the associated embedding $I \to \bar C'$. 

\begin{proposition} \label{prop:14} 
\begin{enumerate}[label=(\alph*)]
\item$\bar g \circ \psi = \psi \circ \bar g$. 

\item There is a choice of $\phi_{g(\bar C)}$ such that 
$\phi_{g(\bar C)} = \phi_C \circ \bar g$. 

\item Suppose $\bar C$ and $\bar C'$ are curves with 
isomorphic Jacobians which we identify via 
this isomorphsim. Let $\phi_{\bar C'}$ and 
$\phi_{\bar C}$ be choices of maps, as above, 
$S \to E(\bar C) = E(\bar C')$. 
If $\phi_{\bar C'} = s\phi_{\bar C}$ for 
$s$ in the automorphism group of $E(\bar C)$ then 
$\bar C' = \bar C$ as subvarieties of $\P^2$. 

\item $\phi_C: S \to E[3]$ preserves the pairing (where 
$E[3]$ has the Weil pairing). 
\end{enumerate}
\end{proposition}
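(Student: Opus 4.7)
The plan is to dispatch (a) and (b) by unwinding the definitions of $\psi$, $\phi_{\bar C}$, and the relevant group actions; to deduce (c) as a formal corollary of (b); and to attack (d) via a uniqueness argument for alternating pairings followed by an explicit sign check.

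For (a), recall that $\psi(s) = \{(s(P),P)\}$ for any choice of $P \in I$. The conjugation action of $\bar g$ on $S$ sends $s$ to $gsg^{-1}$, and the action of $\bar g$ on $E(I)$ (coming from $\Aff(I) \to \GL_2(F_3)$) sends $\{(R,Q)\}$ to $\{(g(R),g(Q))\}$. Then $\bar g(\psi(s)) = \{(gs(P),g(P))\}$; on the other hand $\psi(\bar g(s)) = \{(gsg^{-1}(Q),Q)\}$ for any base point $Q$, and taking $Q = g(P)$ produces the same class. For (b), $I$ is determined by $S$ (Theorem~\ref{theom:9}) and $g \in N(S)$, so $I$ is automatically contained in $g(\bar C)$. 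Using the natural inclusion $\iota': I \hookrightarrow g(\bar C)$ and the identification $E(g\bar C) \cong E(\bar C)$ induced by $g$, the computation of (a) upgrades (after unwinding $\phi_{\bar C} = E(\iota) \circ \psi$) to $\phi_{g\bar C} = \phi_{\bar C} \circ \bar g$ for one choice of $\iota$; Lemma~\ref{lem:13} accounts for any other choice.

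For (c), the hypothesis that $\bar C$ and $\bar C'$ have isomorphic Jacobians forces them to share a $j$-invariant, hence to lie in the same orbit of $N(S)/S$ acting on $L_I$ (the twelve-to-one cover $L_I \to \P^1$ described just before the proposition). Pick $g \in N(S)$ with $g(\bar C) = \bar C'$. Part (b) gives $\phi_{\bar C'} = \phi_{\bar C} \circ \bar g$, and comparing with the hypothesis $\phi_{\bar C'} = s\phi_{\bar C}$ and using that $\phi_{\bar C}$ is an isomorphism shows that the conjugation action of $\bar g$ on $S$ corresponds under $\phi_{\bar C}$ to the element $s \in \Aut(E)$. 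But $\Aut(E)$ is exactly the image of $S^+/S$ in $\Aut(E[3])$ (as stabilizer of $\bar C$ modulo translations), so $\bar g \in S^+/S$, whence $g \in S^+$, and $\bar C' = g(\bar C) = \bar C$.

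Part (d) is the main obstacle. The pairing on $S$ is the commutator in the Heisenberg preimage: $\tilde x\tilde y\tilde x^{-1}\tilde y^{-1} = \rho$. The Weil pairing is a nondegenerate $\bar G$-equivariant alternating pairing on $E[3]$ with values in $\mu_3$. Both pull back to nondegenerate alternating pairings on the rank-two $F_3$-vector space $S \cong E[3]$, and since the space of such pairings is one-dimensional they must agree up to a sign in $F_3^\times$. The cleanest way to fix the sign is via Mumford's theory of theta groups: the Heisenberg group appearing inside $\GL_3(\bar F)$ is precisely the theta group of the degree-three polarization $\mathcal O_E(3[0])$ on $E$, coming from the embedding $\bar C \hookrightarrow \P^2$, and Mumford's theorem identifies the theta-group commutator on the kernel of the polarization with the Weil pairing. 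If one wishes to avoid that machinery, the same identification can be carried out by hand on the Hessian normal form, using that $\tilde x$ acts as $\mathrm{diag}(1,\rho,\rho^2)$ and $\tilde y$ as a cyclic permutation to compute the commutator directly, then comparing with the Weil pairing evaluated on the explicit three-torsion of the Hessian curve.
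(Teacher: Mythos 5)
Your proposal is correct and follows essentially the same route as the paper for (a)--(c): the base-point shift computation for $\psi$, the use of the embedding $g \circ \iota: I \to g(\bar C)$ to produce the right choice of $\phi_{g(\bar C)}$, and for (c) the passage to some $g \in N(S)$ with $g(\bar C) = \bar C'$ followed by the observation that agreement of $\bar g$ with an element of $\Aut(E) \cong S^+/S$ on $S \cong E[3]$ forces $g \in S^+$. For (d) the paper simply cites Silverman's description of the Weil pairing, whereas you supply an actual argument (uniqueness of the nondegenerate alternating pairing on a rank-two $F_3$-space up to sign, then a sign check via the theta group of ${\cal O}(3[0])$ or an explicit Hessian-pencil computation); this is a legitimate and more self-contained filling-in of a step the paper delegates to the reference.
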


\begin{proof}
 $(\psi \circ \bar g)(s) = \{(gsg^{-1}(P),P)\} = 
\{(gsg^{-1}(g(P)),g(P)\} = g(\{(s(P),P)\})$ which proves a). 
As for b), we use the embedding $g \circ \iota: I \to \bar C'$ 
to define $\phi_{\bar C'}$.
We have $\phi_{\bar C'}(s) = 
E(g \circ \iota)(\psi(s)) = 
E(g \circ \iota){(s(g^{-1}P), g^{-1}P)} = 
E(g)(s(g^{-1}P) - g^{-1}P) = 
gsg^{-1}P - P = \phi_C(gsg^{-1})$. 

If $E(\bar C)$ and $E(\bar C')$ are isomorphic 
then $\bar C \cong \bar C'$. Thus there is a 
$g \in N(S)$ such that $g(\bar C) = \bar C'$. 
Since $\bar g$ acts linearly on $S$ and $E(\bar C)$ 
we have $\phi_{\bar C} \circ \bar g = \bar g \circ \phi_{\bar C}$
Part c) follows because if 
$\bar g$ maps to $S^+/S$ then $g \in S^+$ which 
stabilizes $\bar C$. Part d) 
follows from the description of the Weil pairing in 
\cite{Si} p. 98.
\end{proof}

\section{General $F$} 

Now assume $F$ is a general field of characteristic not 2 or 3. Let $\bar F$ be the separable closure and $\bar G$ the Galois 
group of $\bar F/F$. it will be useful to consider 
fields $F \subset K \subset \bar F$ where $K/F$ 
is Galois with group $G$. If, say, $C$ is a structure 
defined over $F$ we let $C_K$ mean the extension of scalars. 
If $C \subset \SB(A)$  (and hence defined over $F$) then so is 
$E = E(C)$ and $I \subset \SB(A)$, but $E$ maybe defined 
over fields where $C$ is not. It is equally true that 
$I \subset \SB(A)$ may be defined over $F$ and $C_K  \supset I_K$ but $C$ not defined over $F$. 
Let  $C$ above have stabilizer $S^+$,  
and distinguished $S \subset S^+$. $G$ acts on 
$K \times_F \SB(A)$ and $K \times_F \SB(S^3(A)) = P^9_{K}$, as well as on $A_K^*$ and $A_K^*/K^*$. 
If $K = \bar F$ we can identify $A_K$ with 
$M_3(\bar F)$ and $A_K^*/K^*$ with $PGL_3(\bar F)$ 
and so $M_3(\bar F)$ $PGL_3(\bar F)$ has induced $\bar G$ actions 
but not the obvious ones, but rather actions 
where $M_3(\bar F)^{\bar G} = A$ and 
$(PGL_3(\bar F))^{\bar G} = A^*/F^*$. 
Similarly $\bar G$ acts on $\P^2(\bar F)$ 
with quotient $SB(A)$. If $\sigma \in \bar G$ 
and $g \in PGL_3(\bar F)$ and 
$P \in \P^2(\bar F)$ then $\sigma(g)(\sigma(P)) = 
\sigma(g(P))$. Recall the line bundle 
${\cal O}(1)_{\P^9}$ descended to a line bundle 
$L$ on $SB(A)$ and $\bar L$ on $\P^2(\bar F)$. 
Of course, the global sections of $\bar L$ 
have a $\bar G$ action whose invariants are the global sections of $L$. The curves we are studying 
are exactly the zeroes of global sections of 
$L$. 

If $C \subset SB(A)$, then 
$G$ preserves $I$ as a set. That is, $I$ is a zero dimensional 
reduced subvariety of $\SB(A)$. Moreover, $\bar G$ preserves 
$I$ as a set if and only if it preserves the associated 
$S \subset \PGL_3(\bar F)$ as a set (not 
elementwise). Since $\bar G$ preserves $C$, it 
preserves $S^+$, but may act nontrivially on 
$S^+/S$ because $\bar G$ may act nontrivially 
on the automorphism group of $E(C)$. Of course, 
if $j \not= 0, 1728$ then $S^+/S = C_2$ and $\bar G$ 
acts trivally on this.

We fix the embedding $I \subset \SB(A)$ and call 
it $a$. If $a$ is also the induced embedding 
$\bar I \subset \P^2(\bar F)$, and $\sigma \in \bar G$, 
then $\sigma \circ a = a \circ \sigma$. 
Since the set of lines in $\P^2$ are preserved 
by $G$, $G$ preserves colinearity and there is an induced map 
$G \to \Aff(I)$. 
If $S \subset PGL_3(\bar F)$ is preserved by $\bar G$, then 
$G$ also preserves the pairing $S \times S \to <\rho>$ where 
$\bar G$ might act nontrivially on $<\rho>$.

We want to use $\phi_C$ to understand how $G$ or $\bar G$ acts on the set of $C_K$. If $\bar G$ preserves 
$I$, then it permutes the set of $\bar C$'s containing $I$.  
Then $\bar G$ also acts on $E(I)$ and we have 
defined $\psi: S \to E(I)$ independent of any 
$\bar C$.  However, if $E(C)$ is defined 
over $F$, then $G$ acts on $E(C)$. We prove: 

\begin{proposition} \label{lem:15} Assume 
$I \subset SB(A)$ is such that $\bar I \subset 
\P^2(\bar F)$ is a set of inflection points for a 
smooth degree 3 curve $\bar C \subset \P^2(\bar F)$. 
\begin{enumerate}[label=(\alph*)]
\item $\psi \circ \sigma = \sigma \circ \psi$ 

\item If $C \subset SB(A)$ contains $I$ then 
$\sigma\phi_C\sigma^{-1} = \phi_C$ 

\item Suppose that for all 
$\sigma \in \bar G$, $\sigma\phi_C\sigma^{-1} = \phi_C \circ t$ for some $t \in S^+/S$ 
as set maps. Then $\bar C$ is $\bar G$ invariant and 
defines $C \subset SB(A)$. 
\end{enumerate}
\end{proposition}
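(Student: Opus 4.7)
The plan is to reduce all three parts to the defining formulas $\psi(s) = \{(s(P),P)\}$ and $\phi_C(s) = s(P) - P$, combined with the semilinearity identity $\sigma(g(P)) = \sigma(g)(\sigma(P))$ that governs the action of $\bar G$ on $\PGL_3(\bar F)$ and $\P^2(\bar F)$, together with the fact that both formulas are independent of which $P \in I$ one picks. With that setup, (a) and (b) become direct calculations, and (c) reduces to Proposition~\ref{prop:14}(c).

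For (a), I would compute $\sigma(\psi(s)) = \sigma(\{(s(P),P)\}) = \{(\sigma(s(P)),\sigma(P))\} = \{(\sigma(s)(\sigma(P)),\sigma(P))\} = \psi(\sigma(s))$, using that $\sigma$ permutes the lines in $\P^2$ and so carries the colinearity relation defining $E(I)$ to itself, and that $\sigma(P) \in I$ is a legitimate base point for the right-hand side. Part (b) is parallel: $(\sigma\phi_C\sigma^{-1})(s) = \sigma(\sigma^{-1}(s)(P) - P) = s(\sigma(P)) - \sigma(P) = \phi_C(s)$, where the final equality uses once more that $\phi_C$ does not depend on the base point chosen in $I$.

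For (c), I would set $\bar C' = \sigma(\bar C)$ and use the isomorphism $\sigma : \bar C \to \bar C'$ to identify $E(\bar C)$ with $E(\bar C')$. The same semilinearity calculation as in (b) then shows $\phi_{\bar C'} = \sigma \phi_{\bar C} \sigma^{-1}$ as maps $S \to E(\bar C)$. Combined with the hypothesis, this gives $\phi_{\bar C'} = \phi_{\bar C} \circ t$, which by Lemma~\ref{lem:13} can be rewritten as $\eta(t) \circ \phi_{\bar C}$ with $\eta(t) \in \Aut(E(\bar C))$. Proposition~\ref{prop:14}(c) then forces $\bar C' = \bar C$, so $\bar C$ is $\bar G$-invariant. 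The one-dimensional subspace of $\bar W$ cutting out $\bar C$ is therefore Galois-stable, descends to a line in $W$, and cuts out the desired curve $C \subset \SB(A)$.

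The main obstacle is the interpretation of the hypothesis in (c): $E(\bar C)$ is not a priori defined over $F$, so $\sigma \phi_C \sigma^{-1}$ only makes literal sense once one fixes an identification of $E(\bar C)$ with $E(\sigma \bar C)$. The natural choice is the canonical $\sigma$-identification of Jacobians, and once that is in place both sides of the hypothesis land in the same group and Proposition~\ref{prop:14}(c) applies verbatim.
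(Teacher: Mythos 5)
Your proposal is correct and follows essentially the same route as the paper: (a) and (b) by direct computation from the base-point-independent formulas together with the semilinearity $\sigma(g(P))=\sigma(g)(\sigma(P))$, and (c) by comparing $\sigma\phi_C\sigma^{-1}$ with $\phi_{\sigma(\bar C)}$, invoking Proposition~\ref{prop:14}(c) to get $\sigma(\bar C)=\bar C$, and then descending the Galois-stable line of sections (Hilbert 90) to obtain $C\subset\SB(A)$. Your closing remark about fixing the $\sigma$-identification of Jacobians is exactly the point the paper handles via the embedding $\iota'=\sigma\iota\sigma^{-1}:I\to\sigma(\bar C)$.
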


\begin{proof}
$\psi \circ \sigma(s) = 
{(\sigma{s}\sigma^{-1}(\sigma(P), \sigma(P))} = 
\sigma({(s(P),P)}) = \sigma \circ \psi(s)$ which 
proves a). As for b), the assumptions imply that 
$\iota: I \to \bar C$ is preserved by $\bar G$ and 
b) follows from a). 

Turning to c), $\sigma\iota\sigma^{-1} = \iota'$ 
defines an embedding $I \to 
\bar C'$ where $\bar C' = \sigma(\bar C)$. 
If $b: \bar C \to \P^2(\bar F)$ is this embedding, 
then $a = b\iota = (\sigma{b}\sigma^{-1}) \circ 
(\sigma\iota\sigma^{-1})$ and $\sigma{b}\sigma^{-1}$ 
is an embedding for $C'$. 
Moreover, $\sigma\phi_C\sigma^{-1} = 
E(\sigma\iota\sigma^{-1}) \circ \psi$ and so 
$\sigma\phi_C\sigma^{-1} = t\phi_{C'}$ for some 
$t \in S^+/S$. Thus $C' = C$. That is, 
$\bar G$ preserves $\bar C$ as a set. 
If $q \in \bar L$ has zeroes the set $\bar C$, 
then $\bar G$ preserves $\bar Fq$ and hence, 
by Hilbert 90, there is a $q \in L$ 
whose zeroes $C \subset SB(A)$ define $\bar C$ over $F$.
\end{proof}

We can now state and prove our main theorem. 

\begin{theorem} \label{theom:16}
 Let $E$ be an elliptic curve over $F$. Let 
$A/F$ be a degree-three algebra. Let $K/F$ be the 
Galois extension with group $G$ obtained by adjoining all the 
three torsion points $E[3]$ to $F$. 
There is a $C \subset \SB(A)$ with $E(C) = E$ if and only if 
there is a subgroup $S \subset (A \otimes_F K)^*/K^*$ such that 
$S$ is preserved by $G$, $S$ is generated by $x,y$ with 
preimage $x',y'$ such that $x'y' = {\rho}y'x'$, and $S$ is 
isomorphic to $E(C)[3]$ as $\bar G$ modules with pairing.
\end{theorem}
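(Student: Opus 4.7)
The plan is to translate between curves in $\SB(A)$ and Heisenberg subgroups of $(A \otimes_F K)^*/K^*$ via the correspondence of Theorem~\ref{theom:9}, then descend a chosen curve by Proposition~\ref{lem:15}(c). Both directions hinge on the identification of $S$ with $E[3]$ as a $\bar G$-module with pairing, which after an appropriate normalization will make the descent criterion automatic.

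For necessity, I start with $C \subset \SB(A)$ having $E(C) = E$. By Corollary~\ref{cor:6} the inflection scheme $I$ satisfies $E(I) = E[3]$ canonically, so $I$ is defined exactly over $K$; the subgroup $S \subset \PGL_3(\bar F)$ attached to $I$ by Lemma~\ref{lem:7} therefore lands in $(A \otimes_F K)^*/K^*$, is $G$-stable, and has the required Heisenberg presentation. The isomorphism $S \cong E[3]$ is $E(\iota) \circ \psi$ from Lemma~\ref{lem:13}; Proposition~\ref{lem:15}(a)(b) shows it is $\bar G$-equivariant and Proposition~\ref{prop:14}(d) shows it preserves the Weil pairing.

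For sufficiency, I start with $S$ as hypothesized and let $\tau: S \to E[3]$ be the fixed pairing-preserving $\bar G$-isomorphism. Theorem~\ref{theom:9} produces $\bar I \subset \P^2(\bar F)$, which is $\bar G$-stable because $S$ is, so $\bar I$ descends to $I \subset \SB(A)$ over $F$. Theorem~\ref{theom:12} gives the Hessian pencil $L_I \cong \P^1_{\bar F}$ of cubics through $\bar I$, mapping $12$-to-$1$ to the $j$-line; choose $\bar C_0 \in L_I$ with $j(\bar C_0) = j(E)$ together with any $\bar F$-isomorphism $E(\bar C_0) \cong E$. The resulting $\phi_{\bar C_0}: S \to E[3]$ from Lemma~\ref{lem:13} and the given $\tau$ both preserve the pairing, so they differ by an element of $\SL_2(F_3) = N(S)/S$. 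By Proposition~\ref{prop:14}(b), replacing $\bar C_0$ by $g(\bar C_0)$ for suitable $g \in N(S)$ twists $\phi_{\bar C_0}$ to $\phi_{\bar C_0} \circ \bar g$, and combined with the $\Aut(E) \subset S^+/S$ freedom in the identification $E(\bar C_0) \cong E$ this calibrates $\phi_{\bar C_0}$ to equal $\tau$ modulo the intrinsic $S^+/S$-ambiguity. Then for every $\sigma \in \bar G$ the $\bar G$-equivariance of $\tau$ yields $\sigma \phi_{\bar C_0} \sigma^{-1} = \phi_{\bar C_0} \circ t_\sigma$ with $t_\sigma \in S^+/S$, so Proposition~\ref{lem:15}(c) descends $\bar C_0$ to the desired $C \subset \SB(A)$, and $E(C) = E$ follows from the matching $\phi_C = \tau$.

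The main obstacle will be the calibration step: the set of pairing-preserving isomorphisms $S \to E[3]$ is an $\SL_2(F_3)$-torsor, while the $N(S)/S$-action on $L_I$ of Proposition~\ref{prop:14}(b) and the $\Aut(E)$-ambiguity in choosing $E(\bar C_0) \cong E$ together supply exactly the matching $\SL_2(F_3)$ worth of freedom, with the overlap encoded in $S^+/S \subset N(S)/S$. Verifying that these freedoms align as required (and confirming there is no residual obstruction from the action on the $j$-line beyond the one already absorbed by picking the correct fibre over $j(E)$) is the delicate point; once it is done, the rest of the argument reduces to the formal properties collected in Propositions~\ref{prop:14} and~\ref{lem:15}.
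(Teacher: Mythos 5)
Your proposal is correct and follows essentially the same route as the paper: necessity via $I \mapsto S$ and the $\bar G$-equivariant, pairing-preserving $\phi_C$ of Lemma~\ref{lem:13} and Propositions~\ref{prop:14}(d), \ref{lem:15}(a)(b); sufficiency by picking $\bar C \supset I$ in the orbit of $E$, noting $\bar g = \phi_{\bar C}^{-1}\circ\tau \in SL_2(F_3) = N(S)/S$ because both maps preserve the pairing, replacing $\bar C$ by $g(\bar C)$, and descending with Proposition~\ref{lem:15}(c). The ``calibration'' you flag as delicate is exactly the paper's one-line step $\phi_{g(\bar C)} = \phi_{\bar C}\circ\bar g = \tau$, so there is no residual obstruction.
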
 

\begin{proof}
 We restrict ourselves to the $PGL_3(\bar F)$ 
orbit associated with $E$. 
If $C$ is as given, then $C$ defines $I \subset \SB(A)$ 
and so an associated $S \subset \PGL_3(\bar F)$. 
By Lemma~\ref{lem:15}, $\phi_C: S \to E[3]$ is a $G$ morphism. 
Note that, in particular, the elements of $S$ are fixed by the 
Galois group of $\bar F/K$ and so $S \subset (A \otimes_F K)^*/K^*$. 

Conversely, suppose $\phi: S \to E[3]$ is a $G$ isomorphism preserving the pairing. 
Let $S$ define $I$ and let $\bar C \supset I$. 
We would like to prove there 
is a $g \in N(S)$ such that $g(\bar C)$ 
comes from a curve in $SB(A)$ . 
Let $\bar g = \phi_C^{-1} \circ \phi \in SL_2(F_3)$ 
and lift $\bar g$ to $g \in N(S)$. Note that $\bar g$ has 
determinant one because $\phi$ preserves the pairing.  
Now $\phi = 
\phi_C \circ g = \phi_{g(C)}$ preserves the $\bar G$ 
action so 
we are done by Proposition~\ref{lem:15}.
\end{proof} 

Assume $I$ is defined over $F$. 
It is clear that the elements of $\bar G$ act 
as affine transformations of $I$ and so there is a 
homomorphism $\bar G \to \Aff(I)$. 
Let us consider the possible images of $\bar G$ 
in $\Aff(I)$. Set $G_0$ to be the image of $\bar G$ 
in $GL_2(F_3)$. If $I \subset C$ and $C$ is defined over 
$F$ then $G_0$ is the Galois group of $K/F$ 
where $K$ is the field defined by adjoining all the 
three torsion points of $E(C)$ and so if $\bar H$ 
is the kernel of $\bar G \to G_0$, then $\bar H$ 
is the Galois group of $\bar F/K$.   
 
If the primitive third root $\rho$ is in $F$, 
then $G_0 \subset SL_2(F_3)$. Otherwise, the 
image of $G_0 \to GL_3(F_3) \to F_3^*$, 
the second map being the determinant, is 
the Galois group of $F(\rho)/F$. 
Other than this, we cannot restrict $G_0$ in 
any way. 

Of course $\bar H$ maps to $S \cong E(I)$ with image we call 
$H_0$. If $\bar J$ is the kernel of $\bar H \to H_0$, 
then $\bar F^{\bar J}$ is $K(I)$, obtained by adjoining the inflection points themselves to $K$. 
Note that since $S$ acts by translation, adjoining one point 
of $I$ over $K$ is equivalent to adjoining them all. 

Suppose $I$ corresponds to $S \subset A_K^*/K^*$. Let $x,y \in S$ 
be a basis and lift to $\tilde x, \tilde y$. Then $I$ consists 
of $L_x \cap L_y$ where $L_x \subset \bar F^3$ is a two dimensional 
space preserved by $\tilde x$, and similarly for $L_y$. 
Summing over the $L_y$, we have that all $L_x$ (and $L_y$) 
are defined over $K(I)$. That is $\tilde x$ and $\tilde y$ 
have degree three equations which split. We will say, in this case, 
that $x$ (and $y$) splits. 

We can say this another way. Taking cubes defines an injective  
morphism $S/K^* \to K^*/(K^*)^3$. If $T \subset K^*/(K^*)^3$ 
is the image, we can form the field $K(T^{1/3})$ of all 
cube roots and let $T^{1/3} \subset K(T^{1/3})$ be the cube roots 
of all elements in $T$. By Kummer theory the Galois group 
of $K(T^{1/3})/K$ is $\Hom(T^{1/3}/K^*,\mu_3)$. The cube 
and cuberoot maps compose to form a well defined homomorphism 
$S \to T^{1/3}/K^*$. Using the pairing on $S$ and duality 
we get a homomorphism $\Hom(T^{1/3}/K^*,\mu_3) \to S$ 
and the composition $\bar H \to \Hom(T^{1/3}/K^*,\mu_3) \to S$ 
can be easily seen to be the map above.  

\begin{lemma} 
$K(I) = K(T^{1/3})$. The Galois group of 
$K(I)/K$ has degree 1,3, or 9 and if it is not 9 
then $A_K \cong M_3(K)$. If $K(I)/K$ has degree 3 
then no point of $I$ is rational over $K$ none the less. 
\end{lemma}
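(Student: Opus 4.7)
The plan is to prove (a) by matching the extension $K(I)/K$ with the Kummer extension $K(T^{1/3})/K$ via the joint eigenspace description of $I$ from the Hessian pencil discussion, and then deduce (b) and (c) as short corollaries. First I will establish $K(I) \subset K(T^{1/3})$. Fix generators $x, y \in S$ with lifts $\tilde x, \tilde y \in A_K^*$ satisfying $\tilde x \tilde y = \rho \tilde y \tilde x$ and $\tilde x^3 = a$, $\tilde y^3 = b$ in $K^*$; the images of $a, b$ in $K^*/(K^*)^3$ generate $T$. Note that $\mu_3 \subset K$, since $E[3] \subset E(K)$ and the Weil pairing surjects onto $\mu_3$ in a Galois-equivariant way. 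Over $L := K(T^{1/3}) = K(a^{1/3}, b^{1/3})$, rescale to $\tilde x' = \tilde x / a^{1/3}$ and $\tilde y' = \tilde y / b^{1/3}$; these satisfy $(\tilde x')^3 = (\tilde y')^3 = 1$, and their minimal polynomials divide $X^3 - 1 = (X-1)(X-\rho)(X-\rho^2)$, which splits into distinct linear factors over $L$. Hence $\tilde x'$ and $\tilde y'$ are diagonalizable over $L$, and by the Hessian description the nine points of $I$ are the intersections of the three $\tilde x$-stable two-dimensional subspaces of $V$ with the three $\tilde y$-stable ones, all defined over $L$.

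For the reverse inclusion $K(T^{1/3}) \subset K(I)$, work over $K(I)$, where all nine points of $I$ are rational; consequently all twelve Hesse lines (each joining three collinear points of $I$) are defined over $K(I)$. Among these, the three $x$-stable lines form a parallel class, each containing a coset of $\langle x \rangle$ in $E(I)$ under the identification $\psi\colon S \to E(I)$. Pick one such line $\ell$, corresponding to a two-dimensional subspace $M \subset V \otimes_K K(I)$ preserved by $\tilde x$. Then $\tilde x$ acts on the one-dimensional quotient $V/M$ by a scalar $\lambda \in K(I)^*$ satisfying $\lambda^3 = \tilde x^3 = a$, so $a^{1/3} \in K(I)$. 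Symmetrically $b^{1/3} \in K(I)$, giving $K(T^{1/3}) \subset K(I)$ and completing (a).

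Part (b) now follows easily: $\Gal(K(I)/K) = H_0$ is a subgroup of $S \cong (\Z/3)^2$ and therefore has order $1$, $3$, or $9$. If the order is less than $9$, then by (a) and Kummer theory $|T| < 9$, so the cubing map $S \to K^*/(K^*)^3$ has nontrivial kernel, providing a non-scalar $\tilde x \in A_K^*$ with $\tilde x^3 = 1$. Its minimal polynomial is a nontrivial divisor of $(X-1)(X-\rho)(X-\rho^2)$ with at least two distinct roots in $K$, so $K[\tilde x]$ contains a nontrivial idempotent, showing $A_K$ is not a division algebra and hence $A_K \cong M_3(K)$. For (c), observe that $S = E(I)$ acts simply transitively (hence freely) on $I$, since both have nine elements; if $|H_0| = 3$, its nontrivial elements translate $I$ without fixed points, so no point of $I$ is fixed by $\bar H = \Gal(\bar F/K)$, i.e.\ $K$-rational. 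The main technical step is the reverse inclusion of (a): identifying the $x$-stable Hesse lines over $K(I)$ and extracting $a^{1/3}$ from the action of $\tilde x$ on the one-dimensional quotient; the other parts are quick consequences.
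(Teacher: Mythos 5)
Your proof is correct and follows essentially the same route as the paper: the forward inclusion by splitting $\tilde x,\tilde y$ over $K(T^{1/3})$ and realizing $I$ as the nine intersections of stable planes, the reverse inclusion by extracting $a^{1/3},b^{1/3}$ from the $\tilde x$- and $\tilde y$-stable lines being defined over $K(I)$, and the splitting of $A_K$ from a nontrivial element of $S$ whose cube is a cube in $K^*$. You additionally spell out the last assertion (no $K$-rational point of $I$ when $[K(I):K]=3$, via the free translation action of $E(I)$ on $I$), which the paper's proof leaves implicit.
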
 

\begin{proof}
Over $K(T^{1/3})$ both $x$ and $y$ split and 
so $I$ is defined over this field. Conversely, $K(T^{1/3})$ 
is clearly the smallest field where $x$ splits for 
any $x \in S$. The Galois group of $K(I)/K$ is a 
subgroup of $S$. If $K(T^{1/3})/K$ has degree 1 or 3, 
then for some nontrivial $\tilde z = \tilde x^a\tilde y^b$ 
we have $\tilde z^3$ is a cube which forces the splitting 
of $A_K$. 
\end{proof} 

Let $G_1$ be the image of $\bar G$ in $\Aff(I)$, 
so we have an exact sequence $1 \to H_0 \to G_1 \to G_0 \to 1$ 
which is induced by the split exact sequence $1 \to E(I) \to 
\Aff(I) \to GL_2(F_3) \to 1$. In fact the first sequenece also 
splits which is almost but not quite immediate. If $H_0$ is trivial 
there is nothing to prove, and if $H_0 \cong E(I)$ the splitting 
is immediate. If $H_0$ has order three then this extension defines 
an $\alpha \in H^2(G_0,H_0)$ and it also is the case 
that $\alpha = 0$ as follows. If nontrivial, $C_3$ is the 
3 - Sylow subgroup of $G_0$ and it suffices to observe 
that $\alpha$ restricts to 0 in $H^2(C_3,H_0)$. This follows 
because one can check that $\Aff(I)$ has no elements of order 9. 

We further investigate the possibilities. In general we know that 
$G_0$ can be any subgroup of $GL_2(F_3)$, and if $H_0 = 1$ 
or $H_0 \cong E(I)$ 
there is nothing more to say. 
When $H_0$ has order 3 we can restrict $G_0$ further. 
Then $\tilde S$ is generated by $\tilde s$ 
and $\tilde y$ where $\tilde y$ is split and $\tilde x$ 
is not. Clearly $A_K$ is split. 
$G_0$ acts on $H_0$ via conjugation and clearly 
$G_0$ preserves the cyclic subgroup generated by $y$. 
If $\rho \in F$, 
this implies that $G_0 = {1}$, $G_0$ is a subgroup of the cyclic group $C_6$. If $\rho \notin F$, 
then $G_0$ is a subgroup of $G_0 = S_3 \oplus C_2$ 
containing the central subgroup $C_2$.

We want to use Theorem \ref{theom:16} to describe the 
$C$ that appear for a $E(C)$ given by that theorem. 
Recall that any such $C$ is a principal homogeneous space 
over $E(C)$, and all such spaces are classified 
by the Galois cohomology group $H^1(\bar G,E(\bar C))$. 
Note that this is the correct cohomology group because 
$E(\bar C)$ is the group of automorphisms of $\bar C$ 
as a principal homogeneous space over $E(\bar C)$. 

More precisely, let $\bar C$ be such a space 
and let $P$ be a point on $\bar C$. Define $e: \bar G \to 
E(\bar C)$ by setting $e(\sigma) = \sigma(P) - P \in E(\bar C)$. 
It is easy to check that $e$ is a one cocycle. 
Note that if we pick another point $Q$ on $\bar C$ 
and use that to define $e': \bar G \to E(\bar C)$, 
then $e(\sigma) - e'(\sigma) = \sigma(\alpha) - \alpha$ 
for $\alpha = P - Q$ the element of $E(\bar C)[3]$. 
Thus $\bar C$ defines a class in 
$\gamma' \in H^1(\bar G, E(\bar C))$. 

In our situation it makes sense to define the cycle 
using an inflection point $P \in I \subset \bar C$. 
Furthermore let $\phi: S \to E(\bar C)[3]$ be the 
Galois preserving isomorphism. Note that $\phi$ is a 
$\phi_C$ from the previous section, so there is an 
embedding $I \subset \bar C$ such that $\phi(s) = 
s(P) - P$ for any choice of $P$. It is clear that 
any $\sigma \in \bar G$ defines an affine map on $I$, 
so we have a homomorphism $\Phi: \bar G \to \Aff(I)$. 

The composition $\bar G \to \Aff(I) \to GL_2(F_3)$ 
defines the action of $\bar G$ on $E(I)$ and hence 
$E(\bar C)[3]$ and hence on $S$. Let 
$\gamma \in H^1(\Aff(I),E(I))$ be the canonical class 
defined in Proposition~\ref{gamma} that It is clear from the 
above description of $\gamma$

\begin{theorem} $\Phi^*(\gamma) \in H^1(\bar G, E(\bar C)[3]))$ 
maps to $\gamma' \in H^1(\bar G,E(\bar C))$ and $\gamma'$ defines 
$\bar C$ as a principal homogeneous space over $E(C)$. 
\end{theorem}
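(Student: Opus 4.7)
The plan is to compute both cohomology classes at the cocycle level and verify they agree under the natural map $H^1(\bar G, E(\bar C)[3]) \to H^1(\bar G, E(\bar C))$ induced by the inclusion of three-torsion.

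First, I would unwind $\Phi^*(\gamma)$. By the paragraph preceding Proposition~\ref{gamma}, the class $\gamma$ is represented by the cocycle $d : \Aff(I) \to E(I)$ given by $d(g) = \{(g(P), P)\}$ for a fixed $P \in I$. Pulling back along $\Phi : \bar G \to \Aff(I)$ gives the cocycle $\sigma \mapsto \{(\Phi(\sigma)(P), P)\}$. Since $\Phi(\sigma)$ is by construction the restriction of the Galois action on $I$, we have $\Phi(\sigma)(P) = \sigma(P)$, so the pulled-back cocycle is $\sigma \mapsto \{(\sigma(P), P)\}$ in $E(I)$.

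Next, I would transport this to $E(\bar C)[3]$ using the identification $\phi_C = E(\iota) \circ \psi$ from Lemma~\ref{lem:13}, which sends $\{(Q, P)\} \mapsto Q - P$ in $E(\bar C)$. Under this identification the pulled-back cocycle becomes $\sigma \mapsto \sigma(P) - P$ with values in $E(\bar C)[3]$, and postcomposing with the inclusion $E(\bar C)[3] \hookrightarrow E(\bar C)$ recovers exactly the cocycle $e$ used to define $\gamma'$. The standard fact that $\gamma'$, defined by $\sigma \mapsto \sigma(P) - P$ for any $P \in \bar C$, classifies $\bar C$ as a principal homogeneous space over $E(\bar C)$ is already recorded in the paragraph introducing $\gamma'$ (different base points yield cohomologous cocycles), so the second assertion of the theorem needs no separate argument.

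The one genuine subtlety is Galois equivariance of the identification $E(I) \cong E(\bar C)[3]$: without it, the computation above would not land in $H^1(\bar G, E(\bar C)[3])$ at all. This is precisely the content of Proposition~\ref{lem:15}(a)--(b), which gives $\psi \circ \sigma = \sigma \circ \psi$ and $\sigma \phi_C \sigma^{-1} = \phi_C$. Once these equivariance statements are invoked, the entire argument reduces to a short cocycle chase, and the main ``obstacle'' is simply the bookkeeping needed to confirm that every step of the chain $E(I) \to E(\bar C)[3] \hookrightarrow E(\bar C)$ respects the $\bar G$-action at the cocycle level.
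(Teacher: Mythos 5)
Your proof is correct and is exactly the argument the paper intends: the paper states this theorem without a formal proof, asserting it is ``clear from the above description of $\gamma$,'' and your cocycle-level unwinding ($\Phi^*(d)(\sigma) = \{(\sigma(P),P)\} \mapsto \sigma(P)-P = e(\sigma)$ via $\phi_C$, with equivariance supplied by Proposition~\ref{lem:15}(a)--(b)) is precisely the omitted verification. No difference in approach.
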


The connection between $\bar C$ and the algebra $A$ is 
stated in \cite{O2} and briefly recalled next. 
Since $S \subset PGL_3(\bar F)$ we have a composition 
$\eta: H^1(\Aff(I),E(I)) \to H^1(\bar G,E(\bar C)[3]) \to H^1(\bar G,PGL_3(\bar F)) \to 
H^2(\bar G,\bar F^*)$ where, we recall, the last map 
is not a homomorphism because $H^1(\bar G,PGL_3(\bar F))$ 
is not a group. 

\begin{proposition}
$\eta(\gamma) \in H^2(\bar G,F^*)$ defines the Brauer class 
of $A/F$. Thus if $C \subset \SB(A)$ then $C$ is defined 
by the image of a $\gamma' \in H^1(\bar G,E(\bar C)[3])$ 
which maps to the Brauer class of $A/F$. 
\end{proposition}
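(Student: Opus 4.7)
The plan is to reduce the statement to O'Neil's theorem \cite{O2} by identifying the tail of $\eta$ with the Heisenberg--boundary construction of the Brauer class of a Severi--Brauer variety containing a principal homogeneous space over $E$. I would first use the preceding theorem to recognize $\Phi^*(\gamma) \in H^1(\bar G, E(\bar C)[3])$ as the canonical three--torsion lift of the class $\gamma' \in H^1(\bar G, E(\bar C))$ representing $\bar C$ as a principal homogeneous space over $E = E(\bar C)$. Concretely, the cocycle is $\sigma \mapsto s_\sigma \in S$, where $s_\sigma$ is the unique translation carrying a fixed inflection point $P$ to $\sigma(P)$.

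Next, I would factor the tail $H^1(\bar G, S) \to H^1(\bar G, \PGL_3(\bar F)) \to H^2(\bar G, \bar F^*)$ as $H^1(\bar G, S) \to H^2(\bar G, \mu_3) \hookrightarrow H^2(\bar G, \bar F^*)$, where the first arrow is the connecting homomorphism of the central extension $1 \to \mu_3 \to \tilde S \to S \to 1$. This factorization is immediate from naturality of the coboundary applied to the morphism of short exact sequences from $1 \to \mu_3 \to \tilde S \to S \to 1$ into $1 \to \bar F^* \to \GL_3(\bar F) \to \PGL_3(\bar F) \to 1$. Since $\phi_C : S \cong E[3]$ is $\bar G$--equivariant by Proposition~\ref{lem:15} and preserves the pairing by Proposition~\ref{prop:14}(d), the extension $1 \to \mu_3 \to \tilde S \to S \to 1$ corresponds under $\phi_C$ to the classical Mumford--Heisenberg extension of $E[3]$ attached to a symmetric degree--three line bundle on $E$.

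Finally, I would invoke O'Neil's theorem: the image in $H^2(\bar G, \bar F^*)$ of the three--torsion lift of $[C] \in H^1(\bar G, E)$ under this Heisenberg boundary is precisely the Brauer class of the Severi--Brauer variety into which $C$ embeds via the associated degree--three linear system. Since $C \subset \SB(A)$ by hypothesis, that Brauer class is $[A]$, and therefore $\eta(\gamma) = [A]$. The second sentence of the statement then follows by reading this backward: any $C \subset \SB(A)$ produces $\gamma' = \Phi^*(\gamma) \in H^1(\bar G, E(\bar C)[3])$ whose image under $\eta$ is the Brauer class of $A$.

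The main subtlety I expect is verifying that the twisted Galois action on $\PGL_3(\bar F)$ (for which the fixed points equal $A^*/F^*$) is truly compatible with the Galois action on $E[3]$ transported through $\phi_C$, so that $S \hookrightarrow \PGL_3(\bar F)$ is genuinely $\bar G$--equivariant and the naturality of the boundary may be applied cleanly. Once this compatibility is confirmed, the argument is a direct appeal to \cite{O2}.
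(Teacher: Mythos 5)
Your argument is correct in substance, but it takes a genuinely different route from the paper's. The paper's proof is a short, self-contained twisting argument: it takes the cocycle $e(\sigma)=\sigma(P)-P$ attached to an inflection point $P$, observes that the modified Galois action $e(\sigma)^{-1}\sigma$ on $\P^2(\bar F)$ fixes $P$, and concludes that the twisted quotient is $\P^2_F$ with invariant ring $M_3(F)$ --- so untwisting the $A$-action by $e$ splits it, which identifies the boundary of $[e]$ with the Brauer class of $A$. In effect the paper reproves the relevant case of O'Neil's theorem in one line rather than citing it. You instead factor the tail of $\eta$ through the connecting map of the central extension $1\to\mu_3\to\tilde S\to S\to 1$ mapping into $1\to\bar F^*\to\GL_3(\bar F)\to\PGL_3(\bar F)\to 1$, identify this with the theta-group obstruction map, and invoke \cite{O2} as a black box. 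What your route buys is structural clarity: it makes visible that $\eta(\gamma)$ lies in the image of $H^2(\bar G,\mu_3)$ (hence is $3$-torsion) and ties the construction to the Mumford--Heisenberg formalism; the compatibility worries you raise at the end are genuine but are already discharged in the paper (Proposition~\ref{lem:15}(b) gives $\bar G$-equivariance of $\phi_C$, Theorem~\ref{theom:16} gives $\bar G$-stability of $S$, and $\tilde S$ is characteristic in the preimage of $S$, being its set of elements of order dividing $3$). What the paper's route buys is independence from the precise statement of \cite{O2} and freedom from the sign/normalization bookkeeping that comes with matching the given embedding $C\subset\SB(A)$ to the linear system used in O'Neil's obstruction map; if you keep your version, you should verify explicitly that the lift $\Phi^*(\gamma)$ corresponds to the divisor class $3[P]$, i.e.\ to the hyperplane class of the given embedding, since that is where the identification with $[A]$ (rather than $-[A]$ or the class of some other Severi--Brauer variety) is actually pinned down.
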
 

\begin{proof} We have a given action of $\bar G$ 
on $M_3(\bar F)$ such that the invariant ring is $A$ 
and such that the quotient of the action on $\P^2(\bar F)$ 
is $\SB(A)$. 
If $C \subset \SB(A)$ and $P \in I \subset \bar C$, 
then $P$ defines the cocycle $e(\sigma): \bar G \to E(\bar C)[3])$. 
Then $e(\sigma)^{-1}\sigma$ defines a new action on $\P^2(\bar F)$ 
which fixes $P$ and hence has quotient variety $\P^2(F)$, 
so new invariant ring $M_3(F)$.
\end{proof}

\section{Examples}

Now we start giving examples, including examples where a $E(C)$ does not appear. We start with the easiest corollary of Theorem 
\ref{theom:16}. Let $E$ be an elliptic curve defined 
of $F$ and $K \supset F$ be the field gotten by adjoining 
all the three torsion points of $E$. Assume $K = F$ or 
$K = F(\rho)$ where $\rho$ is a third root of one. 
The next result is proven in \cite{O2} and \cite{O3} 
for the $K = F$ 
case and in the preprint \cite{AA} for the $K = F(\rho)$ case, 
by different means. 

\begin{corollary} \label{cor:18}
 If $E/F$ 
is an elliptic curve where all three torsion 
points are defined over $F(\rho)$, and $A/F$ 
is a degree-three central simple algebra, 
then there is an $C \subset \SB(A)$ such that 
$E(C) = E$. 
\end{corollary}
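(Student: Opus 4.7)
The plan is to verify the existence condition of Theorem~\ref{theom:16}: construct a subgroup $S \subset (A \otimes_F K)^*/K^*$ that is $G$-stable (for $G = \Gal(K/F)$), has skew-commuting preimages $x', y' \in (A \otimes_F K)^*$ with $x' y' = \rho y' x'$, and is isomorphic to $E[3]$ as a $G$-module with pairing. Here $K$ is the field generated by $E[3]$; Galois equivariance of the Weil pairing forces $\mu_3 \subseteq K$, and the hypothesis gives $K \subseteq F(\rho)$, so either $K = F$ (with $\rho \in F$) or $K = F(\rho) \neq F$.

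The case $K = F$ is immediate. By Wedderburn $A$ is cyclic, and since $\rho \in F$ this yields a symbol-algebra presentation with generators $x', y' \in A^*$ satisfying $x' y' = \rho y' x'$. Setting $S = \langle x, y \rangle \subset A^*/F^*$ we obtain a $G$-stable Heisenberg subgroup, and since both $S$ and $E[3]$ carry non-degenerate alternating $\mu_3$-valued pairings on $F_3^2$, any linear isomorphism equalizing pairing values serves.

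For $K = F(\rho) \neq F$, I would write $A = (L/F, \tau, c)$ (Wedderburn) with $L/F$ cyclic cubic and $y \in A$ the standard generator satisfying $y^3 = c$ and $y \ell = \tau(\ell) y$ for $\ell \in L$. Then $L_K := L \otimes_F K$ is cyclic cubic over $K$; since $\rho \in K$, Kummer theory writes $L_K = K(\beta^{1/3})$ for some $\beta \in K^*$. After possibly replacing $\tau$ by $\tau^{-1}$, $\tau(\beta^{1/3}) = \rho \beta^{1/3}$, so $y \cdot \beta^{1/3} = \rho \beta^{1/3} \cdot y$, giving the required Heisenberg relation (up to relabeling $x', y'$). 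Set $S = \langle [\beta^{1/3}], [y] \rangle \subset (A \otimes_F K)^*/K^*$.

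To verify $G$-stability and match module structures: since $y \in A$, $\sigma(y) = y$ for the nontrivial $\sigma \in G$. Because $L_K/K$ descends to $L/F$, the Kummer class $[\beta] \in K^*/(K^*)^3$ spans a $\sigma$-stable subgroup, so $\sigma(\beta) \equiv \beta^{\pm 1}$ modulo cubes. The alternative $+1$ is excluded since it would force $\sigma$ trivial on $S$, contradicting $\sigma([x, y]) = \sigma(\rho) = \rho^{-1} \neq \rho = [\sigma(x), \sigma(y)]$. Hence $\sigma$ acts on $S$ as $\mathrm{diag}(-1, 1)$ in the basis $\{[\beta^{1/3}], [y]\}$. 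On $E[3]$, Weil-pairing equivariance forces $\det(\sigma) = -1$, and the unique conjugacy class of involutions in $GL_2(F_3)$ of determinant $-1$ has eigenvalues $\pm 1$; taking a matching eigenbasis for $\sigma$ on $E[3]$ and rescaling the $+1$-eigenvector by the appropriate element of $F_3^*$ to align the Weil and Heisenberg pairing values completes the isomorphism. The main subtlety is producing a Galois-equivariant matching of pairings, but once the conjugacy-class uniqueness is invoked it reduces to a single scalar choice.
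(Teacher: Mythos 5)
Your proposal is correct and follows essentially the same route as the paper: reduce to Theorem~\ref{theom:16}, use the cyclicity of $A$ to produce the skew-commuting pair over $K$, and for $K = F(\rho)$ match the $\mathrm{diag}(-1,+1)$ action of $\sigma$ on $S$ (via Kummer theory for the cyclic cubic splitting field) with the $\pm 1$ eigenvector decomposition of $\sigma$ on $E[3]$ forced by Weil-pairing equivariance. The paper's proof is far terser and derives the sign $\sigma(\beta)\equiv\beta^{-1}$ from the cyclic-versus-dihedral dichotomy rather than from your commutator-pairing argument, but these are the same computation; your write-up simply supplies the details the paper leaves implicit.
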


\begin{proof} When $K = F$ this is immediate because 
 $A/F$ is cyclic. When $K = F(\rho)$ we note the following. 
First, if $\sigma$ generates the Galois group of $K/F$ 
then $E$ has a three torsion point $P$ such that $\sigma(P) = P$ 
and a three torsion point $Q$ such that $\sigma(Q) = -Q$. 
If $L \supset K$ is such that $L/F$ is dihedral, then 
$L = L' \otimes_F K$ where $L' = F(a^{1/3})$ and $\sigma$ fixes 
$a^{1/3}$. If $M'/F$ is cyclic of degree 3 and $M = M' \otimes K$ 
then $M = K(b^{1/3})$ where $\sigma(b^{1/3}) \in b^{-1/3}F^*$. 
Again, $E$ is the Jacobian of some $C \subset SB(A)$ 
by \ref{theom:16} and the fact that $A/F$ is cyclic.
\end{proof}

Let $K$ still be the extension field 
of $F$ obtained by adjoining 
all the three torsion points of $E(C)$.  
To begin with, assume $K/F$ is cyclic of degree three. If $C$ exists, 
there is an $S \subset (A \otimes_F K)^*/K^*$ generated by 
images of $x',y' \in A \otimes_F K$ such that $x'y' = {\rho}y'x'$ 
and $S$ is preserved by $G = <\sigma>$. It follows that we can 
choose $x',y'$ such that $\sigma(y') \in y'K^*$ and 
$\sigma(x') \in x'y'K^*$. Suppose $\sigma(y') = y'z$. Since $\sigma$ 
has order three on $A \otimes_F K$, we have $z\sigma(z)\sigma^2(z) = 1$ 
and so $z = \sigma(u)/u$ for $u \in K^*$. Replacing $y'$ by $y'u^{-1}$ 
we may assume $\sigma(y') = y'$ or $y' \in A$. Now write 
$\sigma(x') = x'y'w$ for $w \in K^*$. Again using that $\sigma$ has order 
three, we have $x' = \sigma^3(x') = \sigma^2(x'y'w) = 
\sigma(x'y'wy'\sigma(w)) = x'y'wy'\sigma(w)y'\sigma^2(w)$ 
or $1 = y'^3 w\sigma(w)\sigma^2(w)$. That is, 
$A$ has the form $(a,b)$ where $(a,K/F)$ is trivial. Said another 
way, $A$ is split by an $F(a^{1/3})$ such that $(a,K/F)$ is split. 

The above condition is actually necessary and sufficient. 

\begin{proposition} \label{prop:17}
 Let $A/F$, $E$ and $K/F$ be as above. 
Then there is a $C \subset \SB(A)$ with Jacobian $E$ if and only 
if $A$ is split by $F(a^{1/3})$ where $(a,K/F)$ is trivial.
\end{proposition}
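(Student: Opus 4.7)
The necessity was essentially established in the paragraph preceding the proposition: Theorem~\ref{theom:16} produces $x',y' \in (A\otimes_F K)^\times$ generating the Galois-stable $S$; the cyclic $G$ of order $3$ acts on $E[3]$ unipotently (note that $\rho\in F$, because the Weil pairing forces $\mu_3\subset K$ while the determinant of the action of $G$ on $E[3]$ equals the cyclotomic character, which is trivial since $|G|=3$ and $|F_3^\times|=2$), so one may arrange $\sigma(y')=y'$ via Hilbert~90 and $\sigma(x')=x'y'w$ for some $w\in K^\times$; applying $\sigma^3=1$ yields $y'^3\cdot N_{K/F}(w)=1$. Setting $\alpha:=y'^3\in F^\times$ (central in $A\otimes K$, hence in $F$), this shows $\alpha$ is a norm from $K$, i.e.\ $(\alpha,K/F)$ is trivial, and $F(y')=F(\alpha^{1/3})$ is a maximal subfield of $A$.

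For sufficiency, use $\rho\in F$ to write $A=(\alpha,\beta)_\rho$ as a cyclic symbol, with generators $y',x'\in A$ satisfying $y'^3=\alpha$, $x'^3=\beta$, $x'y'=\rho y'x'$. Pick $\lambda\in K^\times$ with $N_{K/F}(\lambda)=\alpha^{-1}$ (possible by hypothesis). The plan is to produce $\tilde x\in (A\otimes_F K)^\times$ of the form $\tilde x = x'v$ with $v\in K(y')\subset A\otimes K$, so that $\tilde x$ automatically skew-commutes with $y'$ (since $v$ centralizes $y'$) and $\sigma(\tilde x)=\tilde x\cdot y'\lambda$. Writing $v=c+dy'+ey'^2$ with $c,d,e\in K$, the condition $\sigma(v)=y'\lambda v$ becomes the linear system
\[
\sigma(c)=\lambda e\alpha,\qquad \sigma(d)=\lambda c,\qquad \sigma(e)=\lambda d,
\]
which, by iteration, is consistent with $\sigma^3=1$ precisely when $N_{K/F}(\lambda)\,\alpha=1$; an explicit solution is $e=1$, $d=\lambda^{-1}$, $c=(\lambda\sigma(\lambda))^{-1}$.

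With $\tilde x$ so constructed, the images of $\tilde x$ and $y'$ in $(A\otimes K)^\times/K^\times$ generate a $G$-stable subgroup $S$ on which $\sigma$ acts unipotently by $y'\mapsto y'$, $\tilde x\mapsto \tilde x y'$. Since the $G$-action on $E[3]$ is conjugate in $GL_2(F_3)$ to this same unipotent action, there is a $G$-equivariant isomorphism $S\to E[3]$; the commutator $[\tilde x,y']=\rho$ is inherited from $[x',y']$ (as $v$ centralizes $y'$), and can be matched with the Weil pairing on $E[3]$ after possibly interchanging the roles of $x'$ and $y'$ (using $(\alpha,\beta)_\rho=(\beta,\alpha)_{\rho^{-1}}$). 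Theorem~\ref{theom:16} then produces $C\subset\SB(A)$ with $E(C)=E$. The main obstacle is that one cannot prescribe the Galois action on $A\otimes K$ directly; the key trick is to twist $x'$ by an element of the commutative subfield $K(y')$, so that skew-commutation with $y'$ holds for free and the Galois-stability of the image in the projective quotient reduces to an explicit linear system whose solvability is exactly the norm hypothesis.
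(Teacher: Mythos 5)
Your necessity argument is the one the paper gives in the paragraph immediately preceding the proposition: normalize $\sigma(y')=y'$ by Hilbert 90, write $\sigma(x')=x'y'w$, and extract $1=y'^3N_{K/F}(w)$ from $\sigma^3=1$ (your observation that $\rho\in F$ is forced by the triviality of the determinant of an order-$3$ element of $GL_2(F_3)$ is a nice explicit justification of something the paper leaves implicit). The sufficiency direction is where you genuinely diverge. The paper builds an auxiliary cyclic algebra $B'=(a,b')$ over $K$ carrying an explicit semilinear extension of $\sigma$ with $\sigma(y')=y'x'w^{-1}$, descends it to a degree-three algebra $B/F$, compares $B$ with $A$ via $B^{\circ}\otimes A\sim(a,c)$, and finally transports the skew-commuting pair into $A_K=eDe$ using the separating idempotent of $F(a^{1/3})\otimes_F F(a^{1/3})$. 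You instead work entirely inside $A\otimes_F K$: present $A=(\alpha,\beta)_\rho$ and twist $x'$ by an element $v$ of the commutative cubic subalgebra $K[y']$, so that skew-commutation with $y'$ is free and Galois-stability reduces to the semilinear system $\sigma(c)=\lambda e\alpha$, $\sigma(d)=\lambda c$, $\sigma(e)=\lambda d$, whose consistency is exactly $N_{K/F}(\lambda)\alpha=1$. This is shorter, avoids the idempotent bookkeeping, and makes visible that both proofs rest on the same norm computation. Your explicit solution $e=1$, $d=\lambda^{-1}$, $c=(\lambda\sigma(\lambda))^{-1}$ checks out.

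Two points need repair. First, you never verify that $v$ is a unit of $A\otimes_F K$. When $F(\alpha^{1/3})$ is a field different from $K$ this is automatic, since $K[y']$ is then a field; but the hypothesis permits $F(\alpha^{1/3})=K$ (indeed $\alpha$ is always a norm from $F(\alpha^{1/3})$), in which case $K[y']\cong K\times K\times K$ has zero divisors. One can rescue this by noting that the annihilator of $v$ is an ideal of $K[y']$ stable under $\sigma$, which permutes the three primitive idempotents cyclically, so the annihilator is $0$ or all of $K[y']$, and $v\neq0$; the case $\alpha\in(F^*)^3$, where $A$ is split, should be disposed of separately. Second, and more substantively, your device for matching the commutator with the Weil pairing --- interchanging the roles of $x'$ and $y'$ --- does not work: after the swap the $\sigma$-fixed generator of $S$ would have cube $\beta$, and nothing in the hypothesis makes $\beta$ a norm from $K$, so your linear system would no longer be solvable. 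The correct adjustment keeps $y'$ as the fixed generator and replaces $x'$ by $x'^{-1}$ (equivalently, presents $A$ as $(\alpha,\beta^{-1})_{\rho^{-1}}$, keeping $\alpha$ in the distinguished slot), which inverts the commutator $[\tilde x,y']$ without disturbing the Galois structure or the norm condition. For what it is worth, the paper's own proof never checks the pairing requirement of Theorem~\ref{theom:16} at all, so you are being more careful than the source here even though your particular patch fails.
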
 

\begin{proof}
 We saw one direction above. Suppose $A$ is split by $F(a^{1/3})$ 
and $a = N_{K/F}(w)$. Then $N_{K/F}(a/w^3) = a^3/N_{K/F}(w)^3 = 1$. 
Thus there is a $b' \in K$ with $\sigma(b')/b' = a/w^3$. 
Let $B'/K$ be the degree-three cyclic algebra generated by $x'$, $y'$ 
where $x'^3 = a$, $y'^3 = b'$, and $x'y' = {\rho}y'x'$. 
The action of $\sigma$ on $K$ extends to $B'$ by setting 
$\sigma(x') = x'$ and $\sigma(y') = y'x'w^{-1}$. Then as above 
$\sigma^3(y') = \sigma^2(y'x'w^{-1}) = \sigma(y'x'w^{-1}x'\sigma(w^{-1})) = 
y'x'w^{-1}x'\sigma(w^{-1})x'\sigma^2(w^{-1}) = y'aN_{K/F}(w^{-1}) = y'.$ 
It follows that this extension of $\sigma$ has order 3 on $B'$ and thus 
the invariant algebra $B$ is central simple of degree three over 
$F$. Since $B$ contains $x'$ it is split by $F(a^{1/3})$. 
Consider $B^{\circ} \otimes A$ which is also split by $F(a^{1/3})$ 
and hence is represented by an algebra $A' = (a,c)$ over $F$. 
Then $A \otimes_F K$ is similar to 
$D = B' \otimes _K (a,c) = (a,b')_K \otimes (a,c)_K \sim 
(a,b'c)_K$. More precisely, let $e \in F(a^{1/3}) \otimes_F F(a^{1/3})$ 
be the separating idempotent. Then $e$ can be viewed as an element 
of $D$, $A_K = eDe$, and $e$ is $\sigma$ fixed. 
Let $x'', y'' \in (a,c)$ be the elements with $x''^3 = a$, $y''^3 = c$, 
and $x''y'' = {\rho}y''x''$. Set $\alpha = e(x' \otimes 1)e$, 
$\beta = e(y' \otimes y'')e$, both elements in $eDe = A_K$ (but remember 
$e$ is the identity there). Clearly $\alpha^3 = a$ because $e$ commutes 
with $x'$. For the same reason, $\alpha\beta = \rho\beta\alpha$. 
Also, $\sigma(\beta) = e(\sigma(y') \otimes \sigma(y''))e = 
e(y'x'w^{-1} \otimes y'')e = (e(y' \otimes y'')(x'w^{-1} \otimes 1))e = 
e(y' \otimes y'')ee(x'w^{-1} \otimes 1)e = \beta{\alpha}w^{-1}$. 
If $S$ is the image of $\alpha, \beta$ in 
$(A \otimes_F K)^*/K^*$ we are done.
\end{proof}

As a route to a counter example (that is a case where $E(C)$ 
does not appear), let $F$ 
be a field complete with respect to a discrete 
valuation ring with prime $\pi$. 
Suppose $K/F$ ramifies at a prime $\pi$ 
and $A/F$ is a division algebra unramified at 
$\pi$. 
Then any norm of $K/F$ has the form $u^3\pi^r$ and if $r$ is prime 
to $3$, $F(\pi^{r/3})$ cannot be a subfield of the unramified $A$. Thus if $K/F$, and $E$ are as in Proposition~\ref{prop:17}, $E$ cannot appear as an $E(C)$ for $C \subset 
\SB(A)$. More generally, if $F$ is a discrete 
valued field 
with prime $\pi$ and the above hold over the 
completion $F_{\pi}$, then again $E$ does not 
appear. For a specific example, according to Bruce Jordan, let $E$ be the elliptic curve $y^2 + y = x^3 - x^2 - 10x - 20$. This is $X(11)$, If $K/\Q$ is obtained by adjoining the three torsion points, then 
$K/\Q$ ramifies over $\Q$ only at 3, and 11 and with ramification 
index 3 or 6. Thus there is an $F \supset \Q$ 
such that $K/F$ is cyclic of degree three and ramifies 
at a prime $p$ extending 11. Note that $F$ 
must contain $\rho$. Furthermore over 
$K_p/F_p$ is ramified and obtained by adjoining 
the three torsion points of $E$ to $F_p$. 
Now the $p$-adic fields have no 
unramified division algebras but if we adjoin 
an indeterminate we can create one. The degree-three 
algera $A = (K_p(t)/F_p(t),t)$ over $F_p(t)$ is unramified and unsplit 
at $p$. It follows that $E$ does not appear as 
$E(C)$ for a $C \subset \SB(A)$.  

One might hope that when $K/F$ is more general, 
there might be a result similar to Proposition 17, 
that is a criterion with a cohomological flavor. The next 
result makes clear there can be an obstruction 
of a more arithmetic nature, involving the 
non-appearance of the dihedral group of order 6 
as a Galois group. 

\begin{corollary} \label{cor:19}
Suppose $E/F$ is an 
elliptic curve and $K/F$ is the field obtained 
by adjoining all the three torsion points of 
$E$. Assume $K/F$ is cyclic of order 2 
and $F$ contains $\rho$. 
Let $D/F$ be a division algebra of degree three. 
Then there is a $C \subset \SB(D)$ with 
$E(C) = E$ if and only if there is a field 
$L/K$ splitting $D$ such that $L/F$ is Galois 
with group $S_3$, the dihedral group of order 
6. 
\end{corollary}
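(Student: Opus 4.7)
The plan is to apply Theorem~\ref{theom:16} together with a direct analysis of how $\sigma \in \Gal(K/F)$ acts on $E[3]$. Since $\rho \in F$, any element of $\Gal(K/F)$ preserves the Weil pairing and so acts through $SL_2(F_3)$; the only order-$\le 2$ element of $SL_2(F_3)$ is $-I$, so the nontrivial $\sigma$ acts on $E[3]$ as $-I$. Theorem~\ref{theom:16} therefore reduces the existence of $C$ to: finding a subgroup $S \subset (D \otimes_F K)^*/K^*$ preserved by $\sigma$, generated by skew-commuting lifts $x', y' \in (D\otimes K)^*$ (so $x'y' = \rho y' x'$ and $x'^3, y'^3 \in K^*$), on which $\sigma$ acts as inversion.

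For $(\Rightarrow)$, given such $S$ with basis $x,y$, we have $\sigma(x') = u x'^{-1}$ for some $u \in K^*$; applying $\sigma$ again and using that $K$ is central in $D \otimes K$ forces $u \in F^*$. Let $L = K(x') \subset D \otimes K$: a degree-$3$ Kummer extension of $K$, of degree $6$ over $F$, and a maximal subfield of $D \otimes K$ (hence splits $D$). With $\tau(x') = \rho x'$ generating $\Gal(L/K)$, the identity $\sigma\tau\sigma = \tau^{-1}$ follows directly from $\sigma(x') = ux'^{-1}$ and $\sigma(\rho)=\rho$, so $L/F$ is Galois with $\Gal(L/F) \cong S_3$.

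For $(\Leftarrow)$, given $L$, note $\gcd(2,3)=1$ forces $D \otimes K$ to remain a division algebra of degree $3$ over $K$, so the degree-$3$ subfield $L/K$ is a maximal splitting subfield. By Kummer theory $L = K(x')$ with $x'^3 = a \in K^*$. Expanding $\sigma(x') = \alpha + \beta x' + \gamma x'^2$ in the $K$-basis of $L$ and imposing $\sigma\tau\sigma^{-1} = \tau^{-1}$ forces $\alpha = \beta = 0$, giving $\sigma(x') = \gamma a \cdot x'^{-1}$ for some $\gamma \in K^*$; so in $(D\otimes K)^*/K^*$ we have $\sigma(x) = x^{-1}$. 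Next, write $D \otimes K = (L/K,\tau,b)$ and pick $y_0$ with $y_0 x' y_0^{-1} = \tau^{-1}(x')$ and $y_0^3 = b \in K^*$. Applying $\sigma$ to the commutation, $\sigma(y_0)$ has the same inner action on $x'$ as $y_0^{-1}$, so $\sigma(y_0) = y_0^{-1} w$ for a unique $w$ in the centraliser of $L$ in $D\otimes K$, namely $L$. Imposing $\sigma^2(y_0) = y_0$ forces $\sigma(w) = \tau(w)$, so $w$ lies in the cubic subfield $K' := L^{\langle \sigma\tau\rangle}$. Substituting $y' = y_0 v$ for $v \in L^*$ preserves the commutation with $x'$ and keeps the cube in $K^*$ (the new cube is $b\cdot N_{L/K}(v)$), and yields $y'\sigma(y') = \tau^{-1}(v)\,w\,\sigma(v) \in K'$ in the commutative field $L$. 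We must choose $v$ so this product lies in $F^*$; since $\sigma$-invariance of a central $K^*$-element forces it into $F^*$, it suffices to land in $K^* \cap K' = F^*$.

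The main obstacle is exhibiting such $v$. Using $\tau^{-1}(v)\sigma(v) = \tau^{-1}\!\bigl(N_{L/L^{\langle\tau\sigma\rangle}}(v)\bigr)$, the task reduces to showing $\tau(w) \in F^* \cdot N_{L/L^{\langle\tau\sigma\rangle}}(L^*)$ inside the cubic field $L^{\langle\tau\sigma\rangle}$, i.e.\ that the quaternion class $(L/L^{\langle\tau\sigma\rangle}, \tau\sigma, \tau(w))$ over $L^{\langle\tau\sigma\rangle}$ is restricted from $\Br(F)$. My approach is a Hochschild--Serre analysis for $A_3 \trianglelefteq S_3$ acting on $L^*$, exploiting two constraints already available: (i) the $\sigma$-invariance of $[D\otimes K] \in \Br(K)$, forced by $D$ being $F$-defined, gives $b\sigma(b) = N_{L/K}(w) \in F^*$; (ii) the dihedral Kummer characterisation $N_{K/F}(a) \in F^{*3}$ of when $L/F$ has Galois group $S_3$. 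Taking $N_{L^{\langle\tau\sigma\rangle}/F}$ of the putative equation yields the necessary relation $N_{K/F}(b) \in F^{*3}\cdot N_{L/F}(L^*)$, and I expect (i) and (ii) to lift this to the required $v$. Granted this, setting $S = \langle x, y\rangle$ with $y = [y']$ gives a $\sigma$-stable subgroup of $(D\otimes K)^*/K^*$ on which $\sigma$ acts as $-I$, with skew-commutation pairing matching the Weil pairing on $E[3]$; Theorem~\ref{theom:16} then produces the desired $C \subset \SB(D)$.
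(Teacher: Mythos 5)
Your forward direction is correct and essentially coincides with the paper's: you rightly note that since $\rho\in F$ the nontrivial $\sigma$ acts on $E[3]$ through $SL_2(F_3)$, whose unique involution is $-I$, and then $L=K(x')$ is a maximal subfield of the division algebra $D\otimes_F K$ on which $\sigma\tau\sigma^{-1}=\tau^{-1}$, so $L/F$ is dihedral and splits $D$. The converse, however, contains a genuine gap, and you flag it yourself. After the (correct) normalizations $\sigma(x')=\gamma a\,x'^{-1}$ and $\sigma(y_0)=y_0^{-1}w$ with $w\in K'=L^{\langle\sigma\tau\rangle}$, everything hinges on producing $v\in L^*$ with $\tau^{-1}(v)\,w\,\sigma(v)\in F^*$, which you translate into the statement that $\tau(w)$ lies in $F^*\cdot N_{L/L^{\langle\tau\sigma\rangle}}(L^*)$; at that point you write that you ``expect'' constraints (i) and (ii) to yield this. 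That expectation is precisely the content of the converse --- it is where the hypothesis that $L/F$ is dihedral must actually be converted into the existence of a $\sigma$-anti-invariant skew-commuting pair --- so as written nothing has been proved in this direction.

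The paper closes exactly this step by working with symbols rather than elements. Write $D\otimes_F K=(a,b)_K$ with $L=K(a^{1/3})$; the dihedral hypothesis gives $\sigma(a)=a^{-1}z^3$, and $\sigma$-invariance of the class of $D\otimes_F K$ gives $\sigma(b)=b^{-1}N_{L/K}(u)$ (these are your constraints (ii) and (i)). The key observation is that the natural surjection $K^*/(K^*)^3\to K^*/N_{L/K}(L^*)$ is a map of modules over the group ring $F_3[\langle\sigma\rangle]$, which is semisimple because $2$ is invertible mod $3$; hence both modules split into $(\pm1)$-eigenspaces and the map stays surjective on the $(-1)$-eigenspaces. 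Since $\bar b$ lies in the $(-1)$-eigenspace of $K^*/N_{L/K}(L^*)$, it lifts to a class represented by $b'=bN_{L/K}(u')$ with $\sigma(b')=b'^{-1}w^3$ for some $w\in K^*$; replacing $b$ by $b'$ does not change the symbol algebra, and the standard generators $x,y$ of $(a,b')_K$ then generate the required $\sigma$-stable $S$ on which $\sigma$ acts as $-1$. Some such averaging/eigenspace argument (or an equivalent substitute) must be supplied to complete your proof; with it, your element-level setup would go through, but without it the corollary's ``if'' direction is not established.
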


\begin{proof}
 Let $\sigma$ generate the Galois group of 
$K/F$. Assume $S \subset (D \otimes K)^*/K^*$ exists as 
in Theorem~\ref{theom:16}. Then $\sigma$ acts as $-1$ 
on $S$. If $L/K$ is any of the cyclic field 
extensions coming from $S$, then $L/F$ 
is dihedral Galois. 

Conversely, suppose such an $L/K$ 
exists. Then $D \otimes_F K = (a,b)_K$ 
where $\sigma(a) = a^{-1}z^3$ for $z \in K^*$. 
Since $D$ is fixed by $\sigma$, 
$\sigma(b) = b^{-1}N(u)$ where $u \in L^*$ 
and $N: L^* \to K^*$ is the norm map.  
There is a surjection 
$K^*/(K^*)^3 \to K^*/N(L^*)$ which is morphism 
of modules over the group ring $F_3[<\sigma>]$. 
Let $\bar b \in K^*/N(L^*)$ be the image of 
$b$. Since this group ring is semisimple, 
there is a preimage $\tilde b \in K^*/(K^*)^2$ 
such that $\tilde b$ maps to $\bar b$ and 
$\sigma(\tilde b) = (\tilde b)^{-1}$. 
That is, there is a $b' \in K^*$ such 
that $b'$ maps to $\tilde b$ which maps 
to $\bar b$. That is, $b' = bN(u')$ for some 
$u' \in L^*$ and $\sigma(b') = b'^{-1}w^3$ 
for some $w \in K^*$. We can write 
$D \otimes_F K = (a,b')_K$ and so 
$D \otimes_F K$ contains $x,y$ with 
$x^3 = a$, $y^3 = b$, $\sigma(x) = x^{-1}z$, 
$\sigma(y) = y^{-1}w$ and $xy = \rho{yx}$. 
If $S \subset (D \otimes_F K)^*/K^*$ is generated 
by the image of $x$ and $y$, then 
$S$ is as needed.
\end{proof}

The above result can be used in two ways to get interesting examples. 
Let $K$ be a finite extension of some $\Q_p$ 
and $E$ an elliptic curve over $K$ with the following properties. 
First, suppose $p$ is prime to three and $K$ contains a primitive third 
root of one. In addition, assume all the three torsion points 
of $E$ are rational over $K$. It follows that there is no $L/K$ an extension of fields such that the Galois group is $S_3$. Let $K'/K$ 
be a quadratic extension of fields and $E'/K$ the corresponding 
quadratic twist. Then $\Gal(K'/K)$ acts on the three torsion 
points as $-1$. Let $D/K$ be either of the degree-three division 
algebras. $D \otimes_K K'$ cannot have the required $S$ because of the following. Suppose $S$ existed, 
and let $L/K'$ be a cyclic extension defined by 
a rank one subgroup of $S$. Then $L/K$ would 
have Galois group $S_3$, a contradiction. 
Thus $E'$ does not appear as an $E(C)$ for 
$C \subset \SB(D)$ but $E$ does. 

The above is an example where there are two elliptic curves with the same $j$ invariant 
where only one of the them appears as an 
$E(C)$ for $C \subset \SB(D)$. For a different 
kind of example,
assume $F$ is a number field containing 
$\rho$ and $E/F$ 
is an elliptic curve such that the field, $K$, 
formed by adjoining all the three torsion 
is such that $K/F$ is cyclic Galois 
of degree four with $<\sigma>$ as Galois group. 
Note that this property is also true for all 
the quadratic twists of $E$ over $F$. 
By density, there are infinitely 
many primes $p$ of $F$ such that $K_p = 
K \otimes_F F_p$ is a field where $F_p$ 
is the completion. For all but finitely 
many of these primes $p$, for all finite 
$K' \supset F_p$ there is no Galois extension of fields $L/K'$ with Galois group $S_3$. 
Assume $D \otimes_F F_p$ is a division algebra 
for one of these primes $p$ and $K_p'$ 
is such that $F_p \subset K_p' \subset K_p$ 
and $K_p/K_p'$ has Galois group $<\sigma^2>$. 
If $S \subset D^*/F^*$ existed as in Theorem 16, 
the same would be true for 
$(D \otimes_F K_p)^*/K_p^*$. Since $\sigma^2$ 
acts on $S$ as $-1$, this is a contradiction. 
Hence for such $D$ and $E$, there are no 
$C \subset \SB(D)$ such that $E(C)$ has the 
same $j$ invariant as $E$. 

\baselineskip=0in

\end{document}